\newtheorem{theorem}{Theorem}[section]
\newtheorem{lemma}[theorem]{Lemma}
\newtheorem{conjecture}{Conjecture}
\newtheorem{corollary}[theorem]{Corollary}
\newtheorem{claim}{Claim}
\newtheorem{definition}{Definition}
\newenvironment{proof}
      {\medskip\noindent{\bf Proof:}\hspace{1mm}}
      {\hfill$\Box$\medskip}
\def\qed{\ifvmode\mbox{ }\else\unskip\fi\hskip 1em plus 10fill$\Box$}
\def\Ddots{\mathinner{\mkern1mu\raise\p@
\vbox{\kern7\p@\hbox{.}}\mkern2mu
\raise4\p@\hbox{.}\mkern2mu\raise7\p@\hbox{.}\mkern1mu}}
\title{\vspace{-0.7cm}Cycle packing}
\author{David Conlon\thanks{Mathematical Institute, Oxford OX2 6GG, UK. Email: {\tt david.conlon@maths.ox.ac.uk}. Research supported by a Royal Society University Research Fellowship.} \and Jacob Fox\thanks{Department of Mathematics, MIT, Cambridge, MA 02139-4307. Email: {\tt fox@math.mit.edu}. Research supported by a Packard Fellowship, by a Simons Fellowship, by NSF grant DMS-1069197, by an Alfred P. Sloan Fellowship and by an MIT NEC Corporation Award.} \and Benny Sudakov\thanks{
Department of Mathematics, ETH, 8092 Zurich, Switzerland. Email: {\tt benjamin.sudakov@math.ethz.ch}. Research supported in part by SNSF grant 200021-149111 and by a USA-Israel BSF grant.}}
\date{}
\begin{document}
\maketitle

\begin{abstract}
In the 1960s, Erd\H{o}s and Gallai conjectured that the edge set of every graph on $n$ vertices can be partitioned into $O(n)$ cycles and edges. They observed that one can easily get an $O(n \log n)$ upper bound by repeatedly removing the edges of the longest cycle. We make the first progress on this problem, showing that $O(n \log \log n)$ cycles and edges suffice. We also prove the Erd\H{o}s-Gallai conjecture for random graphs and for graphs with linear minimum degree.
\end{abstract}

\section{Introduction}

Packing and covering problems have a rich history in graph theory and many of the oldest and most intensively studied topics in this area (see~\cite{P92}) relate to packings and coverings with paths and cycles. For example, in 1968, Lov\'asz~\cite{L68} proved the following fundamental result about decompositions or edge partitions of a graph into paths and cycles.

\begin{theorem}[Lov\'asz] \label{Lovaszlem}
Every graph on $n$ vertices can be decomposed into at most $\frac{n}{2}$ paths and cycles.
\end{theorem}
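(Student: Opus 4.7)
The plan is to prove Theorem~\ref{Lovaszlem} by induction on $|E(G)|$ (with $n = |V(G)|$ decreasing as needed), handling the easy structural reductions when $G$ has a vertex of small degree, and then treating the substantive case $\delta(G) \ge 3$ by an exchange argument on a minimum decomposition. The base case $|E(G)| = 0$ is trivial since the empty decomposition suffices.

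The easy reductions are as follows. If $G$ has an isolated vertex $v$, I would apply induction to $G - v$, obtaining a decomposition of the same size, while the target bound improves from $n/2$ to $(n-1)/2$, giving slack rather than tightness. If $G$ has a vertex $v$ of degree $1$ with unique neighbor $u$, I would apply induction to $G - v$ and either extend the path ending at $u$ (if such a path exists in the resulting decomposition) by the pendant edge $uv$, or add $uv$ as a new single-edge path; in either case a careful accounting using the floor in $\lfloor n/2 \rfloor$ yields the bound. If $G$ has a vertex $v$ of degree $2$ with neighbors $u_1, u_2$, I would suppress $v$ by replacing the edges $vu_1, vu_2$ with a single edge $u_1u_2$ (allowing a multigraph), apply induction to the resulting smaller graph, and uncontract without changing the number of pieces.

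The substantive case is $\delta(G) \ge 3$. Here I would take a decomposition $\mathcal{D}$ of $G$ into paths and cycles with the minimum number of parts, write $p$ for its number of paths and $c$ for its number of cycles, and argue by contradiction that $p + c \le n/2$. Since each path has two endpoints and cycles have none, the total path-endpoint incidence equals $2p$, so assuming $p + c > n/2$ and applying pigeonhole (together with a parity observation at odd-degree vertices of $G$) should produce a vertex $v$ that is the endpoint of at least two distinct paths $P_1, P_2$ in $\mathcal{D}$. I would then attempt to merge $P_1$ and $P_2$ into a single walk at $v$: if the result has no repeated vertex it is a path (or a cycle when the far endpoints coincide), and we save one piece, contradicting minimality.

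The main obstacle is the case in which the merged walk revisits a vertex and decomposes only as path-plus-cycle, yielding no immediate saving. To handle this I would refine $\mathcal{D}$ to additionally minimize a secondary potential among all minimum decompositions — for instance, the sum of squared path lengths — and show that the offending merge can be replaced by a rearrangement at the repeated vertex that decreases this potential, contradicting the refined choice. Iterating the argument, or exploiting the fact that $\delta(G) \ge 3$ supplies extra edges to reroute, should ultimately force a strict reduction of $|\mathcal{D}|$ whenever $p + c > n/2$. I expect this exchange step — guaranteeing that a strictly beneficial swap is always available when the bound is violated — to be the heart of the argument and the main technical difficulty.
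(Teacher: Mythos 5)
The paper does not prove Theorem~\ref{Lovaszlem} at all; it is quoted as a black box from Lov\'asz~\cite{L68}, so your proposal has to stand on its own, and as written it has genuine gaps in both the reductions and the main case. The degree-$2$ suppression replaces $vu_1,vu_2$ by a single edge $u_1u_2$ and then ``applies induction to the resulting smaller graph,'' but that graph is in general a multigraph (already when $u_1u_2\in E(G)$), and the statement you are inducting on is \emph{false} for multigraphs: two vertices joined by three parallel edges need two pieces (a $2$-cycle plus an edge), exceeding $n/2=1$. So the inductive hypothesis you invoke is simply not available after suppression. The degree-$1$ reduction also has an unresolved accounting problem: when $n$ is odd and the decomposition of $G-v$ happens to have no path ending at $u$ (it may cover $u$ only by cycles or internally), adding $uv$ as a new path gives $\lfloor (n-1)/2\rfloor+1=(n+1)/2>n/2$, and nothing in the plan rules this out; fixing it requires strengthening the inductive statement (e.g.\ forcing odd-degree vertices to be path-endpoints), which is already a nontrivial part of the real theorem.

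The more serious gap is the case $\delta(G)\ge 3$, which is where the entire content of Lov\'asz's theorem lies. Your pigeonhole only produces a vertex that ends two paths when $p>n/2$; if a minimum decomposition is cycle-heavy (small $p$, large $c$), counting path-endpoints gives nothing, and your only exchange move --- merging two paths at a shared endpoint --- cannot reduce the number of cycles at all. Bounding the cycles is exactly the hard part: for Eulerian graphs, where a decomposition uses only cycles, the bound $n/2$ is the Haj\'os conjecture, which the concluding remarks of this very paper point out is still open. Any correct proof must therefore use paths in an essential way to absorb would-be cycles (the known arguments first handle graphs with all degrees odd, which decompose into exactly $n/2$ paths, and then perform a delicate treatment of even-degree vertices), and neither the secondary potential ``sum of squared path lengths'' nor the vague appeal to $\delta(G)\ge 3$ supplying ``extra edges to reroute'' substitutes for that step. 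As it stands, the heart of your argument is a hope that a beneficial swap always exists, not a proof that one does.
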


Theorem~\ref{Lovaszlem} easily implies that there is a decomposition of any graph on $n$ vertices into $n - 1$ paths. This was subsequently improved~\cite{DK00, Y98} to $\lfloor \frac{2n}{3} \rfloor$ paths, a result which is sharp for a disjoint union of triangles. However, Lov\'asz' original motivation for studying such decompositions, a problem of Gallai which asks whether every connected graph on $n$ vertices can be decomposed into $\lfloor \frac{n+1}{2} \rfloor$ paths, remains open. The following old and well-known conjecture of Erd\H{o}s and Gallai~\cite{E83,EGP66} concerning the analogous question for cycles also remains open.

\begin{conjecture}[Erd\H{o}s-Gallai]
Every graph on $n$ vertices can be decomposed into $O(n)$ cycles and edges.
\end{conjecture}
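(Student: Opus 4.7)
The plan is to bootstrap from the linear-minimum-degree case, which the abstract states can be handled in $O(n)$ cycles and edges, to all graphs by interleaving a degree-based peeling with the iterative extraction of dense subgraphs. Fix an absolute constant $c$. Whenever some vertex has current degree at most $c$, remove it and add each of its (at most $c$) incident edges to the decomposition as a singleton edge. Over the full algorithm this step contributes at most $cn = O(n)$ edges, so it may be resumed whenever it becomes applicable.

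Once no vertex of degree at most $c$ remains, let $H$ be the surviving subgraph. Use a Mader-style greedy averaging argument to extract a subgraph $H' \subseteq H$ on $n'$ vertices of minimum degree at least $\delta n'$ for an absolute constant $\delta > 0$, and invoke the linear-minimum-degree result on $H'$ to decompose $E(H')$ into $O(n')$ cycles. Delete these edges from $H$, return to the peeling step, and iterate. The resulting decomposition has size $O(n) + \sum_i O(n'_i)$, where the sum ranges over the dense subgraphs extracted during the recursion, so it suffices to establish $\sum_i n'_i = O(n)$.

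The principal obstacle is exactly this charging. Vertex-disjointness of the $H'_i$ is not automatic, since after removing $E(H'_i)$ a vertex of $H'_i$ may still carry substantial degree to the rest of $H$ and may be picked up in a later extraction. I would attack the charging via a potential argument, seeking a function of the current graph (roughly, a concave functional of the degree sequence such as $\Phi(G) = \sum_v \sqrt{\deg_G(v)}$, or a combinatorial invariant counting components of certain density) that starts at $O(n)$ and drops by $\Omega(n'_i)$ at each extraction. Matching these scales so that every extraction is paid for by a genuine, irreversible simplification of the graph is the technically delicate step.

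The true difficulty, and the reason the $O(n)$ bound is stated as an open conjecture while only $O(n \log \log n)$ is proved in general, is that an adversarial graph can plausibly force a cascade of small dense subgraphs whose orders sum to $\omega(n)$: imagine a graph in which each extracted $H'_i$ has vertex set concentrated in a region of sublinear size but degrees mostly to a ``hub'' that replenishes after deletion. A promising strengthening of the plan is therefore to relax ``linear minimum degree'' in Step~2 to a weaker edge-expansion hypothesis, prove a version of the linear-minimum-degree theorem under that weaker hypothesis, and then exhibit a partition of $G$ into expanding blocks separated by only $O(n)$ boundary edges. This reduces the conjecture to two independent expansion questions, either of which might be more tractable than the original statement.
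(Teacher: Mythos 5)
You are attempting to prove the Erd\H{o}s--Gallai conjecture itself, which this paper explicitly leaves open: the paper proves only the weaker bound $O(n\log\log d)$ in general (Theorem~\ref{firstmain}), together with the conjecture for random graphs and for graphs of linear minimum degree (Theorem~\ref{EGlargemin}). Your sketch does not close that gap, and you candidly flag the missing step yourself. But the failure is more basic than the charging argument you identify. The extraction step already breaks: after peeling vertices of degree at most $c$, the surviving graph $H$ has minimum degree greater than a constant, and no Mader-style averaging argument can then produce a subgraph $H'$ on $n'$ vertices of minimum degree $\delta n'$ with $n'$ non-constant, because such a subgraph need not exist. Take for $G$ the point--line incidence graph of a projective plane: it is $C_4$-free with $\Theta(n^{3/2})$ edges, so every subgraph on $n'$ vertices has $O(n'^{3/2})$ edges and hence minimum degree $O(\sqrt{n'})$; a subgraph of minimum degree $\delta n'$ therefore forces $n'=O(\delta^{-2})$. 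Your algorithm then degenerates into extracting bounded-length cycles one at a time and needs $\Omega(n^{3/2})$ of them. Locally sparse graphs of superlinear edge count are exactly the hard regime of the conjecture, and they are invisible to any reduction that only ever invokes the linear-minimum-degree theorem. Even where dense subgraphs do exist, the bound $\sum_i n'_i=O(n)$ is the entire content of the argument, and no potential function with the required $\Omega(n'_i)$ drop is exhibited.

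Your closing suggestion --- replace ``linear minimum degree'' by an edge-expansion hypothesis and partition $G$ into expanding blocks separated by $O(n)$ edges --- is in fact close to what the paper actually does: Lemma~\ref{lem:expansion} partitions a graph into expander components after deleting edges, Theorem~\ref{thmcutdense} handles $d$-cut dense graphs in $O(n/d)$ cycles and edges, and Lemma~\ref{lempartcutdense} partitions a linear-minimum-degree graph into cut-dense pieces. The reason this does not yield $O(n)$ in general is quantitative: the expander partition of Lemma~\ref{lem:expansion} must delete about $sn\log n$ edges to guarantee expansion rate $s$, and the leftover edges at each stage force the iteration of Lemma~\ref{noreallyalmostthere}, costing the extra $\log\log d$ factor. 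So the two ``independent expansion questions'' you propose are not independent at all; the trade-off between the quality of the expansion inside the blocks and the number of boundary edges is precisely where the known arguments lose, and your proposal does not offer a way around it.
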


More progress has been made on the corresponding covering problems, where we no longer insist that the edges of the paths and cycles have to be disjoint. For example, Pyber~\cite{P85} resolved the covering version of the Erd\H{o}s-Gallai conjecture, showing that every graph on $n$ vertices can be covered by $n-1$ cycles and edges. Similarly, settling a question of Chung~\cite{C80}, Fan \cite{F02} proved the covering version of Gallai's conjecture, showing that the edges of every connected graph on $n$ vertices may be covered by $\lfloor \frac{n+1}{2} \rfloor$ paths (an asymptotic version was proved earlier by Pyber~\cite{P96}). However, the decomposition conjectures are thought to be more difficult (see~\cite{C80, P92}).


Let $f(n)$ be the minimum number such that every graph on $n$ vertices can be decomposed into at most $f(n)$ cycles and edges. The Erd\H{o}s-Gallai conjecture states that $f(n)=O(n)$. An example of Gallai (see \cite{EGP66}) shows that $f(n) \geq (\frac{4}{3} - o(1))n$ and Erd\H{o}s \cite{E83} later remarked that there is an example showing that $f(n) \geq (\frac{3}{2}-o(1))n$. As noted in~\cite{EGP66}, it is easy to see that $f(n)=O(n\log n)$. Indeed, Erd\H{o}s and Gallai \cite{EG59} showed that every graph with $n$ vertices and $m > \ell(n-1)/2$ edges contains a cycle of length at least $\ell$. By greedily removing cycles of longest length, we see, after removing $O(n)$ cycles, that the graph that remains will be acyclic or have at most half the edges. The bound $f(n)=O(n\log n)$ follows from a simple iteration. Here we make the first progress on the Erd\H{o}s-Gallai conjecture, showing that $f(n)=O(n \log \log n)$.
This is a corollary of the following stronger result.

\begin{theorem}\label{firstmain}
Every graph on $n$ vertices with average degree $d$ can be decomposed into $O(n \log \log d)$ cycles and edges.
\end{theorem}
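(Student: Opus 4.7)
The bound $O(n\log d)$ arises by iterating Erd\H{o}s--Gallai: in a graph with average degree $d$ one finds a cycle of length at least $d-1$, removes it, and after $O(n)$ such removals the average degree has halved; doing this $O(\log d)$ times finishes off the graph. To improve the bound to $O(n\log\log d)$ I would try to compress many halvings into a single ``square-root phase'', proving the following key lemma and iterating it $O(\log\log d)$ times.

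\textbf{Key Lemma.} \emph{Every graph on $n$ vertices with average degree $d$ admits an edge-disjoint collection of $O(n)$ cycles whose removal leaves a graph of average degree at most $\sqrt{d}$.}

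Given the key lemma, iterate: after one application the average degree is at most $\sqrt{d}$, after two it is at most $d^{1/4}$, and after $k=\lceil\log_{2}\log_{2}d\rceil$ applications it is a constant. The remaining $O(n)$ edges may then be treated as singleton ``edge'' pieces, so the total count is $O(n)\cdot k + O(n) = O(n\log\log d)$, as required.

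\textbf{Strategy for the Key Lemma.} Reducing the average degree from $d$ to $\sqrt{d}$ requires deleting $\Omega(nd)$ edges using $O(n)$ cycles, so the cycles must on average have length $\Omega(d)$. My plan would be first to pass to a subgraph $H\subseteq G$ with minimum degree at least $d/4$ by iteratively peeling off low-degree vertices; the peeled-off edges form a sparse remainder that can be absorbed into $O(n)$ short cycles and single edges separately. Next, if the minimum degree of $H$ is already linear in $|V(H)|$, one can invoke the separate ``linear minimum degree'' result mentioned in the abstract to decompose $H$ into $O(|V(H)|)\leq O(n)$ cycles directly. In the intermediate regime $|V(H)|\gg d$, I would attempt to produce $\Omega(n)$ edge-disjoint cycles of length $\Omega(d)$ simultaneously by combining a P\'osa rotation--extension argument with a density-increment step: either one round of $O(n)$ Erd\H{o}s--Gallai removals already deletes the required $\Omega(nd)$ edges, or these removals concentrate the surviving edges onto a substantially smaller vertex set on which one iterates.

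\textbf{Main obstacle.} The hardest point is sustaining long cycles over a long sequence of removals: a single cycle of length $d$ perturbs the minimum degree of $H$ by at most $2$, but $\Omega(n)$ consecutive removals move the average degree by a constant factor, and Erd\H{o}s--Gallai alone no longer guarantees that the cycles stay long. The core of the argument must therefore be a structural theorem extracting many edge-disjoint long cycles \emph{simultaneously} from a graph of minimum degree $\delta$ --- most naturally via a density-increment reduction to the linear-minimum-degree case treated elsewhere in the paper. Making this reduction work without squandering too many edges in the peeling step is the chief technical difficulty.
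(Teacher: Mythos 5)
Your iteration skeleton is the same as the paper's (the paper reduces the average degree from $d$ to $d^{9/10}$ per round using $O(n)$ cycles and iterates $O(\log\log d)$ times; your $d\to\sqrt d$ variant is equivalent), so the only content of the theorem is the Key Lemma --- and your proposal does not prove it. You yourself flag the crucial step, extracting $\Omega(n)$ edge-disjoint cycles of average length $\Omega(d)$, as ``the chief technical difficulty,'' and the tools you name do not supply it: P\'osa rotation--extension plus an unspecified ``density increment'' is a hope, not an argument, and the reduction to the linear-minimum-degree theorem only helps when the host graph has $O(d)$ vertices, since that theorem gives $O(c^{-12}n)$ pieces and so is useless unless $c$ is bounded below by an absolute constant; you give no mechanism for reaching that regime. (There is also a circularity to watch: in the paper the linear-minimum-degree theorem is itself proved using Theorem~\ref{firstmain}, though in regimes where this could be decoupled.)

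The paper's resolution of exactly this difficulty is quite different and is missing from your plan. Rather than trying to pull many long cycles out of a minimum-degree-$\delta$ graph, it first greedily deletes longest cycles (at most $15n$ of them) until the circumference is at most $d/30$; a P\'osa-type expansion lemma then yields an edge partition of what remains into subgraphs on at most $d/30+2$ vertices whose orders sum to at most $3n$ (Lemma~\ref{circumference}); and inside each such small piece it applies the real key lemma of the paper (Lemma~\ref{lem:main}): every graph on $m$ vertices can have all but $m^{2-1/10}$ edges decomposed into at most $m/2$ cycles. This leaves average degree $O(t^{9/10})$ with $t=d/30$, which is the $d\to d^{9/10}$ step. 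Lemma~\ref{lem:main} is in turn proved by an expander decomposition (Lemma~\ref{lem:expansion}), a randomly chosen ``hub'' set $U$ through which any two outside vertices can be joined by short paths avoiding prescribed vertices and previously used edges (Lemma~\ref{lem:randomsub}), and Lov\'asz's path/cycle decomposition combined with Lemma~\ref{sparseends} so that the resulting paths have spread-out endpoints and can be closed into cycles through $U$. None of this machinery --- neither the ``kill long cycles first, then exploit small circumference'' reduction nor the near-perfect cycle decomposition of small pieces --- appears in your proposal, so as it stands the proof has a genuine gap at its core step.
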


We will also prove the Erd\H{o}s-Gallai conjecture in certain special cases. In particular, we may exploit the fact that random graphs are good expanders to prove that they satisfy the Erd\H{o}s-Gallai conjecture. The random graph $G(n,p)$ on vertex set
$[n] = \{1, \dots ,n\}$ is constructed by taking each potential edge independently with probability $p$. We
say that $G(n,p)$ possesses a property $\cal P$ asymptotically almost surely, or a.a.s.~for 
short, if the probability that $G(n,p)$ possesses $\cal P$ tends to $1$ as $n$ grows to infinity.

\begin{theorem}\label{thmrandom}
There exists a constant $c > 0$ such that for any probability $p := p(n)$ the random graph $G(n,p)$ a.a.s.~can be decomposed into at most $cn$ cycles and edges.
\end{theorem}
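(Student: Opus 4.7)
The plan is to split into two regimes according to the expected average degree $d = np$.

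In the sparse regime $d \le C$ (for a large absolute constant $C$), a Chernoff estimate shows that $G(n,p)$ has at most $2Cn$ edges asymptotically almost surely, so I would decompose trivially into individual edges, giving an $O(n)$ decomposition.

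When $d > C$, I would iteratively extract long cycles using expansion. Standard concentration estimates on $G(n,p)$ show that a.a.s.\ its giant component has $(1-o(1))n$ vertices, all degrees lie in $(1 \pm o(1))d$, and every set $S$ with $|S| \le n/3$ satisfies $e_G(S, \overline{S}) = \Omega(d|S|)$. In such a pseudorandom graph a P\'osa-style rotation--extension argument (as used by Krivelevich and others for long cycles in expanders) produces a cycle of length $(1-o(1))n$; I would remove it and repeat. Since each removed cycle is $2$-regular, the degree sequence shifts by only $2$ per iteration and the expansion parameters degrade slowly, so one can continue for roughly $\lfloor d/2 \rfloor$ iterations before the maximum degree drops to $O(1)$. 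At that point only $O(n)$ edges remain, which I include as singletons. The total count is then $d/2 + O(n) = O(n)$, since $p \le 1$ forces $d \le n$. For the dense subrange $d \ge \log^{A} n$ I could alternatively quote the Hamilton decomposition theorems of Knox--K\"uhn--Osthus or Krivelevich--Samotij to obtain the same conclusion directly.

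The main obstacle is preserving strong expansion through all $\lfloor d/2 \rfloor$ iterations, which is delicate when $d$ is only slowly growing: the ``cushion'' in the expansion estimates is small, so each cycle removal must be chosen carefully, probably via a greedy/absorption scheme that prevents any small set from losing too large a fraction of its boundary in a single round. A cleaner alternative, which I would try first, is to prove once and for all that $G(n,p)$ has an ``almost-Hamilton decomposition'' consisting of $\lfloor d/2 \rfloor$ edge-disjoint cycles of length $(1-o(1))n$ plus $o(n^2 p)$ leftover edges, and then handle the leftover by the trivial decomposition into edges together with the sparse-case bound.
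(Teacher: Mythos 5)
Your sparse regime ($d\le C$) is fine, but the dense regime contains a genuine gap rather than a deferred technicality. Extracting $\lfloor d/2\rfloor$ edge-disjoint cycles of length $(1-o(1))n$ from $G(n,p)$ is an approximate Hamilton decomposition, and the step you wave at --- that the expansion ``degrades slowly'' over $\Theta(d)$ rounds --- is exactly the hard statement: the removed cycles are chosen adaptively, they eventually constitute a constant fraction of all edges, and nothing in a P\'osa-type argument guarantees that the (completely non-random) leftover after many rounds still expands or even has large minimum degree. Moreover, your structural inputs are false in part of the range you need them: for $C\le d\le(1-\epsilon)\log n$ the graph a.a.s.\ has isolated vertices, degrees fluctuate by constant factors (they are essentially Poisson), and the giant component has size $\alpha(d)n$ with $\alpha(d)<1$, so ``all degrees $(1\pm o(1))d$'' and cycles of length $(1-o(1))n$ are simply unavailable. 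Quoting Knox--K\"uhn--Osthus or Krivelevich--Samotij does not close the gap either: those theorems produce $\lfloor\delta/2\rfloor$ edge-disjoint Hamilton cycles, and since $\delta\approx d-\Theta(\sqrt{d\log n})$ the leftover still has $\Theta(n\sqrt{d\log n})=\omega(n)$ edges; this leftover is an arbitrary subgraph of a dense random graph, so neither ``decompose into single edges'' nor ``the sparse-case bound'' applies to it. The same objection kills your fallback plan: $o(n^2p)$ leftover edges is $\omega(n)$ whenever $p\gg 1/n$, so handling the leftover is the whole problem, not an afterthought.

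For comparison, the paper never packs long cycles into the dense part at all. For $p\le n^{-1/5}$ it shows $G(n,p)$ is a.a.s.\ $(\epsilon,\gamma)$-sparse, a hereditary local-sparseness property which, via the P\'osa-type Lemma~\ref{Posa}, forces every subgraph with $m$ edges to contain a cycle of length at least $(m/18\gamma n)^{1/(1-\epsilon)}$; this is polynomially longer than $m/n$, so greedy longest-cycle removal terminates after only $O(n)$ cycles (Corollary~\ref{bcor}, Lemma~\ref{gnqlem}). For larger $p$ it splits $G(n,p)$ into $G_1\sim G(n,q)$ with $q=n^{-1/5}$ and $G_2\sim G(n,p-q)$, decomposes $G_2$ by Lov\'asz's theorem plus Lemma~\ref{sparseends} into at most $n/2$ paths together with $O(n)$ cycles and edges (no vertex an endpoint of more than $\sqrt n$ paths), and then closes each path into a cycle using edge-disjoint paths of length $2$ inside $G_1$, which exist because every pair of vertices of $G(n,q)$ has at least $q^2n/2=n^{3/5}/2$ common neighbours; what remains of $G_1$ is again handled by the hereditary sparse bound. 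If you want to salvage your approach, you would need to supply a proof of an approximate Hamilton (or long-cycle) decomposition with only $O(n)$ uncovered edges, valid for all $p\ge C/n$ --- a much stronger statement than the theorem itself --- whereas the splitting-and-reconnecting trick avoids it entirely.
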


Building on the ideas used to prove the Erd\H{o}s-Gallai conjecture in random graphs, we also prove the Erd\H{o}s-Gallai conjecture for graphs of linear minimum degree.

\begin{theorem} \label{EGlargemin}
Every graph $G$ on $n$ vertices with minimum degree $cn$ can be decomposed into at most $O(c^{-12}n)$ cycles and edges.
\end{theorem}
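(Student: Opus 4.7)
The plan is to emulate the strategy of Theorem~\ref{thmrandom} --- find long cycles inside expanding subgraphs and remove them iteratively --- adapted to the deterministic setting of linear minimum degree. A graph with $\delta(G) \geq cn$ need not itself be a strong expander, so the first step is to extract one.

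I would establish a structural lemma: every graph with minimum degree $cn$ contains an induced subgraph $H$ on $\Omega(n)$ vertices which is a robust expander, in the sense that every moderately sized subset $S \subseteq V(H)$ satisfies $|N_H(S) \setminus S| \geq c^{O(1)}|S|$. This should follow from a standard dichotomy: either $G$ itself expands, in which case one takes $H = G$, or a sparse cut isolates a denser piece, on which one recurses until expansion is achieved. Inside $H$ one then applies P\'osa's rotation-extension technique, or a refinement tailored to robust expanders, to find a cycle of length $\Omega(c^{O(1)} n)$; for a sufficiently strong robust expander, this cycle can be taken Hamiltonian in $H$.

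The advantage of finding a Hamilton cycle of $H$ is that its removal decreases the degree of every vertex of $H$ by exactly $2$, so the expansion of $H$ in the residual graph is essentially unchanged. One then iterates, packing $\Omega(c^{O(1)} n)$ such cycles into $H$ before its minimum degree degrades too far. At that point we extract a fresh expander from the current graph and repeat, absorbing any edges that never enter an extracted expander via the greedy Erd\H{o}s--Gallai long-cycle argument quoted in the introduction. Summing cycle counts over all phases, each factor of $c^{-1}$ that enters --- for extracting the expander, for the cycle length guaranteed within it, and for the iteration and recursion depths --- compounds to give the claimed bound of $O(c^{-12} n)$.

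The main obstacle will be controlling how the expansion of $H$ degrades under iterated cycle removal. Although a single Hamilton-cycle removal preserves minimum degree uniformly, the finer expansion parameters can still erode over many rounds, so one must either set up a potential-function argument or periodically reset by extracting a new expander from the residual graph. Careful accounting of the edges that fail to lie on any extracted expander, and their absorption into the greedy procedure, is where much of the polynomial loss in $c$ in the final bound ultimately arises.
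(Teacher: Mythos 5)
There is a genuine gap, and it sits exactly where the real difficulty of the theorem lies: the edges that do \emph{not} end up inside any extracted expander. Your dichotomy (expand or cut and recurse) is essentially the paper's Lemmas~\ref{keylong} and~\ref{lempartcutdense}, and it is fine; but each sparse cut you discard still carries up to $d\,n^2=\Theta(n^2)$ edges (for fixed $c$), and you propose to absorb all of these "via the greedy Erd\H{o}s--Gallai long-cycle argument quoted in the introduction." That argument only yields $O(n\log n)$ cycles for a general graph with quadratically many edges -- it gives $O(n)$ only under an additional sparseness hypothesis guaranteeing cycles of length polynomial in $m/n$ (Corollary~\ref{bcor}), which the cross-cut edges do not satisfy. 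The paper's way around this is the key idea you are missing: decompose each bipartite cross graph $B_i$ into at most $n/2$ paths whose endpoints both lie in a cut-dense part $V_i$ (Lemma~\ref{bipdecomp}), and then close each such path into a cycle using short edge-disjoint connecting paths inside $G[V_i]$, which exist by a Menger-type argument combining cut-density and thinness (Lemmas~\ref{shortpath} and~\ref{nice0}). Without some mechanism of this kind, the cross edges alone already prevent an $O(c^{-12}n)$ bound.

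The second problem is your treatment of the dense expander $H$ itself. Packing Hamilton cycles cannot by itself decompose all of $E(H)$: $H$ is not regular, so after removing Hamilton cycles a residual graph remains whose minimum degree is small while it may still contain many edges concentrated on high-degree vertices; your expander-extraction lemma needs linear minimum degree and so cannot be re-applied to this residue, and the "potential function / periodic reset" you gesture at is precisely the unresolved difficulty. (Even producing Hamilton cycles, rather than merely long cycles, from rotation-extension in a robust expander is a nontrivial extra step not available in the paper.) The paper never iterates cycle removal inside a dense piece. Instead, Theorem~\ref{thmcutdense} handles a $d$-cut dense graph by passing to a random sparse subgraph $G_q$ (which is $(\epsilon,\gamma)$-sparse, thin, still cut dense, and of small maximum degree, Lemma~\ref{gqlem}), decomposing the complementary dense part into at most $n/2$ paths and $O(n)$ cycles and edges via Lov\'asz's theorem and Lemma~\ref{sparseends}, closing those paths through $G_q$ by short edge-disjoint paths, and finally decomposing what is left of $G_q$ into $O(n)$ cycles and edges by the greedy argument, which works there precisely because of sparseness. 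So both pillars of your plan -- greedy absorption of leftover edges and iterated Hamilton-cycle packing -- would need to be replaced by the path-closing machinery before the $O(c^{-12}n)$ count could be justified.
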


We will prove Theorem~\ref{firstmain} in the next section, reserving the proof of a key technical lemma to Section~\ref{firstmainlemma}. We prove Theorem~\ref{thmrandom}, that the Erd\H{o}s-Gallai conjecture holds for random graphs, in Section~\ref{secrandom}. In Section~\ref{sectcutdense}, we show that the Erd\H{o}s-Gallai conjecture holds for graphs with no sparse cut and in Section~\ref{sectmin} we use this result to prove Theorem~\ref{EGlargemin}.
For the sake of clarity of presentation, we systematically omit floor and ceiling signs whenever they are not crucial. We also do not make any serious attempt to optimize absolute constants in our statements and proofs. Unless stated otherwise, we will use $\log$ to denote logarithm taken to the base $2$ and $\ln$ for the natural logarithm.

\section{General graphs}
\label{sectproofs}

In this section and the next, we will prove Theorem~\ref{firstmain}, that every graph on $n$ vertices with average degree $d$ can be decomposed into $O(n \log \log d)$ cycles and edges.

We will begin by stating a couple of useful lemmas. The first is a slight variant of P\'osa's celebrated rotation-extension lemma (\cite{Po76}, see also \cite{B01}, \cite{L07}) which says that if a graph does not contain long paths or cycles then it has poor expansion properties. For a given graph $G$ and a subset of its vertices $X$, the external neighborhood $N(X)$ denotes the set of vertices in $G-X$ which have at least one neighbor in $X$. The following P\'osa-type statement follows immediately from Lemmas 2.6 and 2.7 in \cite{BBDK}.

\begin{lemma}\label{Posa}
If a graph $G$ contains no cycle of length greater than $3t$ then there is a subset $S$ of size at most $t$ such that $|N(S)| \leq 2|S|$.
\end{lemma}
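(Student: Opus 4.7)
The plan is to prove the contrapositive: assuming that every subset $S \subseteq V(G)$ with $|S| \leq t$ satisfies $|N(S)| > 2|S|$, I would produce a cycle in $G$ of length greater than $3t$. The natural tool is P\'osa's rotation-extension technique, and the lemma is essentially a repackaging of its standard consequence for cycles.

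First I would produce a long path. Let $P = v_0 v_1 \cdots v_\ell$ be a longest path in $G$. By maximality every neighbor of $v_0$ lies on $P$, so the \emph{rotation} operation is available: whenever $v_0 v_i \in E(G)$ with $i \geq 2$, the sequence $v_{i-1} v_{i-2} \cdots v_1 v_0 v_i v_{i+1} \cdots v_\ell$ is another longest path with the same right endpoint $v_\ell$ but new left endpoint $v_{i-1}$. Let $L \subseteq V(P)$ be the set of all vertices attainable as a left endpoint after some finite sequence of such rotations, with $v_\ell$ fixed throughout. The classical P\'osa counting shows that $|N(L)| \leq 2|L|$: any external neighbor of $L$ would either extend $P$ (contradicting maximality) or must occupy one of at most two distinguished positions on $P$ associated to each endpoint in $L$. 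The expansion hypothesis therefore forces $|L| > t$, and the standard sharpening of the argument gives $|V(P)| \geq 3|L| > 3t$.

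Second I would upgrade this long path into a long cycle. If some $u \in L$ is adjacent to $v_\ell$, then the rotated path from $u$ to $v_\ell$ together with the edge $u v_\ell$ is a cycle on $V(P)$ of length greater than $3t$, and we are done. Otherwise $v_\ell$ has no neighbor in $L$, and I would apply rotations at the right end of each rotated path (fixing its new left endpoint $u$) to produce, for each $u \in L$, a set $R(u)$ of possible right endpoints. The same counting-plus-expansion argument forces $|R(u)| > t$. Either some $w \in R(u)$ is adjacent to $u$, giving a long cycle directly, or an edge-counting/pigeonhole argument applied to $L$ and $\bigcup_u R(u)$, whose neighborhoods again cannot satisfy the expansion bound without producing a chord between them, closes a long cycle on $V(P)$.

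The main obstacle is the careful P\'osa bookkeeping: ensuring with the right constants that $|N(L)| \leq 2|L|$ holds, that each endpoint in $L$ forces three distinct positions on $P$ to yield $|V(P)| \geq 3|L|$, and that the resulting cycle has length strictly greater than $3t$ rather than merely $\Omega(t)$. These details are precisely the content of Lemmas 2.6 and 2.7 of \cite{BBDK}, from which the present statement follows by direct combination, as already noted in the excerpt.
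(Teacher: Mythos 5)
The paper does not actually prove this lemma: it is quoted directly as a consequence of Lemmas 2.6 and 2.7 of \cite{BBDK}, so to the extent that your final paragraph simply defers to that reference, you are doing exactly what the paper does. The first half of your sketch is also sound: rotations with $v_\ell$ fixed give the endpoint set $L$ with $|N(L)|\leq 2|L|$, the expansion hypothesis forces $|L|>t$, and since every vertex of $L$ has all its neighbours on $P$, applying expansion to a subset $S\subseteq L$ of size exactly $t$ gives $|V(P)|\geq |S|+|N(S)|>3t$. (Your phrasing ``$|V(P)|\geq 3|L|$'' is not what follows -- expansion only applies to sets of size at most $t$ -- but this is a harmless slip, and the case where some $u\in L$ is adjacent to $v_\ell$ correctly yields a cycle on $V(P)$ of length greater than $3t$.)

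The genuine gap is the remaining case, which is precisely the nontrivial content of the lemma: when no $u\in L$ is adjacent to $v_\ell$, and more generally no rotated endpoint pair $(u,w)$ with $w\in R(u)$ is adjacent, you claim that ``an edge-counting/pigeonhole argument applied to $L$ and $\bigcup_u R(u)$, whose neighborhoods again cannot satisfy the expansion bound without producing a chord between them, closes a long cycle on $V(P)$.'' No such argument is given, and the mechanism you describe is not sound: the hypothesis $|N(X)|>2|X|$ for small $X$ is a vertex-expansion condition controlling the \emph{size} of neighbourhoods, and it does not force an edge between two prescribed large sets, nor does it imply that $V(P)$ spans a cycle. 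This path-to-cycle step is exactly why two separate lemmas of \cite{BBDK} are needed (roughly, one producing a long path from expansion, and a second converting the absence of a long cycle into a small non-expanding set, via an analysis of a longest cycle rather than a pigeonhole chord). As a self-contained proof your outline therefore does not close; as a citation it coincides with the paper, which gives no proof of its own.
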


Recall that the circumference of a graph is the length of the longest cycle. The following lemma says that if a graph has small circumference then it may be split into subgraphs of small order whose vertex sets do not overlap by much on average.

\begin{lemma}\label{circumference}
If $G$ is a graph with $n \geq 3$ vertices and circumference at most $t$ then there is an edge partition of $G$ into subgraphs $G_1, \dots, G_s$ such that $|V(G_i)| \leq t + 2$ and $\sum_{i=1}^s |V(G_i)| \leq 3 n - 6$.
\end{lemma}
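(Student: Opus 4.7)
The plan is to induct on $n = |V(G)|$, with the base case $n = 3$ verified directly. The key observation driving the induction is that removing one vertex decreases the target $3n - 6$ by exactly $3$, which equals the vertex count of any piece covering at most two edges (a single edge or a length-$2$ path).

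If $G$ has a vertex $v$ of degree at most $2$, I would apply the inductive hypothesis to $G - v$ (which has $n - 1$ vertices and circumference at most $t$), obtaining a partition of $E(G - v)$ with sum at most $3(n - 1) - 6$. I then add one new piece consisting of the at most two edges incident to $v$, which is a subgraph on at most $3 \le t + 2$ vertices, keeping the total sum within $3n - 6$.

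If every vertex of $G$ has degree at least $3$, I would turn to the block decomposition of $G$. Each bridge becomes a $2$-vertex piece. For each $2$-connected block $B$ with $|V(B)| \le t + 2$, take $B$ itself as one piece. For each $2$-connected block $B$ with $|V(B)| > t + 2$, use an ear decomposition of $B$ starting from a longest cycle $C_0$: the circumference bound forces each ear to have length at most $t - 1$, since concatenating an ear with a shortest path between its endpoints in the current subgraph produces a cycle of length at most $t$. Each ear thus becomes a piece on at most $t \le t + 2$ vertices. The naive ear decomposition of $B$ gives per-block vertex-sum $2|E(B)| - |V(B)|$; when this exceeds $3|V(B)| - 6$, I would combine ears sharing a common endpoint into single pieces of at most $t + 2$ vertices so as to reduce the per-block sum to at most $3|V(B)| - 6$.

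The main obstacle is verifying this per-block bound for dense $2$-connected blocks, where the number of ears is large and one must argue that sufficiently many ear-combinations can be arranged within the $t + 2$ vertex constraint. Once this per-block bound is established, the global bound $\sum |V(G_i)| \le 3n - 6$ follows by summing over blocks and invoking the block-decomposition identity $\sum_B (|V(B)| - 1) = n - 1$ for connected $G$; a short computation shows that this identity supplies the slack $3 k_1 + k_2 \ge 3$ (where $k_1$ is the number of $2$-connected blocks and $k_2$ the number of bridges) needed to reach $3n - 6$.
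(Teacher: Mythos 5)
Your Case 1 (deleting a vertex of degree at most $2$) is fine, and your observation that every ear of a block is short is correct, but the step you yourself flag as ``the main obstacle'' is exactly the heart of the lemma, and it is a genuine gap rather than a routine verification. A $2$-connected block $B$ with circumference at most $t$ can have as many as roughly $t|V(B)|/2$ edges, so in any ear decomposition almost all ears are single chords. Taking these as separate pieces gives vertex sum about $2|E(B)|$, and your proposed repair --- merging ears that share a common endpoint --- produces stars, which are sparse: any family of pieces each of which is a union of ears through one common vertex has vertex sum at least of order $|E(B)|$, which can be $\Theta(t\,|V(B)|)$, not $O(|V(B)|)$. Moreover the stars need not even be admissible, since a graph of circumference at most $t$ can have vertices of degree far larger than $t+2$. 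A concrete bad instance is the join of $K_{t/2}$ with an independent set of size $n-t/2$: it is $2$-connected, has minimum degree $t/2\ge 3$, circumference $t$, and about $tn/2$ edges; the clique vertices have degree $n-1$, so grouping chords at them violates the $t+2$ bound, while grouping at the independent vertices gives about $n$ pieces of size about $t/2$ each, for a vertex sum of order $tn$. To meet the bound $3|V(B)|-6$ one must use pieces whose edge density is of order $t$, and nothing in the ear decomposition tells you such dense, small-vertex-set clusters of ears and chords exist; no bookkeeping of the form ``combine ears at common endpoints'' can supply this.

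The missing structural input is precisely what the paper's P\'osa-type rotation--extension statement (Lemma~\ref{Posa}) provides: if there is no cycle of length greater than $t$, then there is a set $S$ with $|S|\le\lceil t/3\rceil$ and $|N(S)|\le 2|S|$. The paper then takes the single piece $G_1=G[S\cup N(S)]$, which has at most $3|S|\le t+2$ vertices and contains \emph{every} edge incident to $S$, deletes $S$, and inducts on the remaining $n-|S|$ vertices; each deleted vertex is charged at most $3$ piece-vertices, which is exactly where $3n-6$ comes from. In the join example above this mechanism automatically produces the dense pieces (a small batch of independent vertices together with their common neighbourhood) that your ear-based scheme cannot see. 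Without an expansion/non-expansion dichotomy of this kind, the dense-block case of your argument does not close.
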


\begin{proof}
The proof is by induction on $n$. The base case $n = 3$ is trivial. Assume, therefore, that $G$ is a graph on $n \geq 4$ vertices. Then, by Lemma~\ref{Posa}, there is a set $S$ of size at most $\lceil t/3 \rceil$ such that $|N(S)| \leq 2|S|$. Let $G_1$ be the induced subgraph on $S \cup N(S)$, so the number of vertices of $G_1$ is at most $3|S| \leq t + 2$. Let $G'$ be the subgraph on $V\char92 S$ with edge set $E(G)\char92 E(G_1)$ so that $G_1$ and $G'$ form an edge partition of $G$. If $|V(G')| \geq 3$, we apply the induction hypothesis to edge partition $G'$ into subgraphs of order at most $t + 2$ such that the sum of the sizes of their vertex sets is at most $3 |V(G')| - 6$. Together with $G_1$, which also satisfies $|V(G_1)| \leq t + 2$, the total number of vertices used is at most
\[|V(G_1)| + 3|V(G')| - 6 \leq 3|S| + 3|V(G')| - 6 = 3|S| + 3|V\char92 S| - 6 = 3 n - 6.  \]
If $|V(G')| < 3$, then $G_1$ and $G'$ form the desired partition as $|V(G_1)| + |V(G')| \leq n + 2 \leq 3n - 6$.
\end{proof}

Our main lemma, which we will prove in the next section, is as follows.

\begin{lemma} \label{lem:main}
For every graph on $n \geq 2^{2000}$ vertices, it is possible to partition all but at most $n^{2 - \frac{1}{10}}$ edges into at most $\frac{n}{2}$ cycles.
\end{lemma}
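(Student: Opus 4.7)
My plan is to extract long cycles iteratively from $G$, and then use Lemma~\ref{Posa} and Lemma~\ref{circumference} to handle the residual graph once no long cycles remain.

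If $e(G) \le n^{2-1/10}$, I output no cycles and declare every edge to be leftover. Assume then that $e(G) > n^{2-1/10}$. Set the threshold $t_0 := n^{0.9}$ and iteratively remove cycles of length at least $3 t_0$ from $G$, one at a time. Call this the \emph{extraction phase}. When the extraction phase terminates, the residual graph $G^*$ has circumference less than $3t_0$, so Lemma~\ref{circumference} edge-partitions it into subgraphs $G_1,\dots,G_s$ with $|V(G_i)| \le 3 t_0 + 2$ and $\sum_i |V(G_i)| \le 3n - 6$. Consequently $e(G^*) \le (3t_0-1)(n-1)/2 \le \tfrac{3}{2} n^{1.9}$, and (after a harmless tweak of the constant in $t_0$) every residual edge fits inside the leftover budget $n^{2-1/10}$.

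The leftover side is thus handled; the cycle count is the hard side. Each extracted cycle has length at least $3 n^{0.9}$, so the number of extracted cycles is at most $e(G)/(3n^{0.9}) \le \tfrac{1}{6} n^{1.1}$---a factor of roughly $n^{0.1}/3$ too many. Closing this gap is the main obstacle, and I expect it to be the heart of the proof. To tackle it, I would strengthen the extraction using Lemma~\ref{Posa}. Inside a subgraph with good P\'osa-type expansion, one should be able to find a cycle of length far larger than $3 t_0$---ideally of length $n^{1-o(1)}$---so that a few such cycles cover the bulk of the edges. When expansion fails, Lemma~\ref{Posa} yields a small set $S$ with $|N(S)| \le 2|S|$; one peels off $G[S \cup N(S)]$, dumping at most $O(|S|^2)$ edges into the leftover pile, and continues extracting from the remainder.

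The main difficulty, where the analysis is most delicate, is making this trade-off quantitatively tight: when expansion is good we need a genuine polynomial improvement over the trivial length $3 t_0$, and when expansion fails the peeling step must add only $O(|S|)$ cycles and $O(|S|^2)$ leftover edges, with the total over all peels staying inside the budgets of $n/2$ cycles and $n^{2-1/10}$ edges. Tracking the ``vertex budget'' $\sum_i |V(G_i)| \le 3n$ from Lemma~\ref{circumference} simultaneously with the P\'osa cycle-length guarantee will likely require an amortization in which each vertex is charged at most a bounded number of cycles and at most $n^{0.9}$ leftover edges, and this is where I expect the exponent $1/10$ in the lemma to enter.
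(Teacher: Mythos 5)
Your proposal handles only the easy side of the lemma and leaves the hard side open, as you yourself acknowledge: after the extraction phase you have at most $\tfrac{1}{6}n^{1.1}$ cycles, a factor of about $n^{0.1}$ over the budget of $\tfrac{n}{2}$, and the paragraph meant to close this gap is a plan rather than an argument. Moreover, the direction you suggest cannot work quantitatively. If the graph is dense, with $\Theta(n^2)$ edges, then partitioning all but $n^{2-1/10}$ edges into at most $\tfrac{n}{2}$ cycles forces the \emph{average} cycle length to be close to $n$ (nearly spanning), not merely $n^{1-o(1)}$; so even a strengthened P\'osa-type guarantee of cycles of length $n^{1-o(1)}$ inside expanding pieces would still leave you a polynomial factor short, and greedily extracting nearly Hamilton cycles one at a time is not something Lemma~\ref{Posa} or Lemma~\ref{circumference} delivers. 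Also note that Lemma~\ref{circumference} plays no role in the paper's proof of this lemma (it is used later, in Lemma~\ref{almostthere}); the circumference-based residual bound you compute is not where the exponent $\tfrac{1}{10}$ comes from.

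The paper's actual mechanism for staying within $\tfrac{n}{2}$ cycles is entirely different and is the missing idea. One first deletes $O(sn\log n)$ edges with $s = 3n^{8/9}$ to split the graph into components with strong edge-expansion (Lemma~\ref{lem:expansion}). Inside each expanding component one sets aside a small ``connector'' set $U$ of size $3n^{8/9}$, discarding all edges incident to $U$ (this is a negligible $O(n^{2-1/9})$ loss), where $U$ has the robust property (Lemma~\ref{lem:randomsub}) that any two outside vertices can be joined by a short path through $U$ even after forbidding a prescribed small vertex set and a prescribed set of up to $\tfrac{1}{2}n^{4/3}$ edges. One then applies Lov\'asz's Theorem~\ref{Lovaszlem} to the rest, obtaining at most $\tfrac{n}{2}$ paths and cycles, adjusts path endpoints so no vertex is an endpoint of too many paths (Lemma~\ref{sparseends}), and closes each path into a cycle via an edge-disjoint short path through $U$. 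Thus each of the at most $\tfrac{n}{2}$ Lov\'asz pieces becomes one cycle, which is how the cycle count is controlled; the leftover edges are only those deleted for expansion, those touching $U$, and those dropped in the endpoint adjustment, totalling at most $16n^{2-1/9}\log n \le n^{2-1/10}$ for $n \ge 2^{2000}$. None of this machinery appears in your sketch, so the proposal as written has a genuine gap at precisely the point the lemma is about.
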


This easily implies that for any $n$ (not only those which are sufficiently large) and any graph on $n$ vertices, it is possible to partition all but at most $n^{2 - \frac{1}{10}}$ edges into $O(n)$ cycles. We will apply the result in this form in the proof of the following lemma.

\begin{lemma}\label{almostthere}
If $G$ is a graph with $n$ vertices and circumference at most $t$ then it is possible to delete $O(n)$ cycles such that the average degree of the graph that remains is at most $18 t^{9/10}$.
\end{lemma}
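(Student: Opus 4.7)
The plan is to combine Lemma~\ref{circumference} with Lemma~\ref{lem:main} in a direct way. First, apply Lemma~\ref{circumference} to $G$: since the circumference is at most $t$, this produces an edge partition of $G$ into subgraphs $G_1, \dots, G_s$ with $|V(G_i)| \le t+2$ for every $i$ and $\sum_i |V(G_i)| \le 3n-6$. The subgraphs inherit circumference at most $t$, but what we really need is only the vertex-count bound.

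Next, apply Lemma~\ref{lem:main} to each $G_i$ separately, using the ``for all $n$'' version the paper points out immediately after the lemma. This gives, for every $i$, a partition of all but at most $|V(G_i)|^{19/10}$ of the edges of $G_i$ into $O(|V(G_i)|)$ cycles. Summing over $i$, the total number of cycles used is at most $O(\sum_i |V(G_i)|) = O(n)$, which supplies the ``delete $O(n)$ cycles'' part of the conclusion.

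It remains to bound the average degree of what is left. The number of surviving edges is at most
\[\sum_{i=1}^{s} |V(G_i)|^{19/10} \;=\; \sum_{i=1}^{s} |V(G_i)|^{9/10}\cdot|V(G_i)| \;\le\; (t+2)^{9/10}\sum_{i=1}^{s}|V(G_i)| \;\le\; 3(t+2)^{9/10}\,n,\]
and $(t+2)^{9/10}\le 3t^{9/10}$ for all $t\ge 1$, yielding at most $9 t^{9/10}\,n$ surviving edges, hence average degree at most $18 t^{9/10}$. The one pitfall to watch is that Lemma~\ref{lem:main} as stated requires $n\ge 2^{2000}$; for the small $G_i$ this is absorbed into the constant in the ``$O(n)$'' number of cycles, exactly as the paper notes when it reformulates Lemma~\ref{lem:main} for arbitrary $n$. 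Beyond that bookkeeping, there is no real obstacle: the whole point is that Lemma~\ref{circumference} lets us invoke Lemma~\ref{lem:main} on pieces of size $\le t+2$, and the concave savings of $|V(G_i)|^{9/10}$ combine with $\sum |V(G_i)|\le 3n$ to give the $t^{9/10}$ dependence.
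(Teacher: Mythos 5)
Your proposal is correct and follows exactly the paper's own argument: apply Lemma~\ref{circumference} to split $G$ into pieces of order at most $t+2$ with total vertex count at most $3n$, then apply the all-$n$ version of Lemma~\ref{lem:main} to each piece, giving $O(n)$ cycles and at most $(t+2)^{9/10}\cdot 3n \le 9t^{9/10}n$ leftover edges, hence average degree at most $18t^{9/10}$. The only cosmetic difference is that the paper also dispenses explicitly with the trivial case $n<3$ before invoking Lemma~\ref{circumference}.
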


\begin{proof}
If $n < 3$, there is nothing to prove. For $n \geq 3$, apply Lemma~\ref{circumference} to $G$ to find an edge partition of $G$ into subgraphs $G_1, \dots, G_s$ such that $|V(G_i)| \leq t + 2$ and $\sum_{i=1}^s |V(G_i)| \leq 3 n$. We now apply Lemma~\ref{lem:main} to each $G_i$. This tells us that we may partition the edges of $G_i$ into at most $O(|V(G_i)|)$ cycles and at most $|V(G_i)|^{2-\frac{1}{10}}$ remaining edges. The total number of cycles used is at most
\[\sum_{i=1}^s O(|V(G_i)|) = O(n).\]
Moreover, the total number of edges remaining is at most
\[\sum_{i=1}^s |V(G_i)|^{2 - \frac{1}{10}} \leq \left(\max_i |V(G_i)|^{9/10}\right) \cdot \left(\sum_{i=1}^s |V(G_i)|\right) \leq (t + 2)^{9/10} \cdot 3n \leq 9 t^{9/10} n.\]
Hence the average degree is at most $18t^{9/10}$.
\end{proof}

By deleting long cycles and applying Lemma~\ref{almostthere}, we may prove an iteration step which will be sufficient to imply Theorem~\ref{firstmain}.

\begin{lemma}\label{noreallyalmostthere}
If $G$ is a graph with $n$ vertices and average degree $d \geq 30$ then it is possible to delete $O(n)$ cycles so that the graph that remains has average degree at most $d^{9/10}$.
\end{lemma}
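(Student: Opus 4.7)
The strategy is exactly the one flagged at the start of the statement: first use the classical Erd\H{o}s--Gallai bound on long cycles to reduce the circumference to $O(d)$, and then feed the resulting graph into Lemma~\ref{almostthere}.

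Fix an absolute constant $c$ large enough that $18 \leq c^{9/10}$; for instance $c = 24$ works. Set $\ell = d/c$. Recall the Erd\H{o}s--Gallai theorem quoted in the introduction: any graph on $n$ vertices with more than $\ell(n-1)/2$ edges contains a cycle of length at least $\ell$. Starting from $G$, I repeatedly extract such a long cycle and delete its edges while one exists. Each deletion removes at least $\ell = d/c$ edges, and the starting graph has $dn/2$ edges, so this phase terminates after at most $dn/(2\ell) = cn/2 = O(n)$ steps. Call the resulting subgraph $G'$; by construction its circumference is strictly less than $\ell$.

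Now apply Lemma~\ref{almostthere} to $G'$ with parameter $t = \ell$. This deletes a further $O(n)$ cycles and leaves a graph whose average degree is at most
\[
18\, t^{9/10} \;=\; 18\, (d/c)^{9/10} \;\leq\; d^{9/10},
\]
by the choice of $c$. Combining the two phases, the total number of cycles deleted is $O(n)$ and the final average degree is at most $d^{9/10}$, as required. The hypothesis $d \geq 30$ enters only to ensure $\ell = d/c > 1$, so that the long-cycle deletion phase is meaningful and the cycles we remove have well-defined length.

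There is no deep obstacle in this argument: the heavy lifting is done by Lemma~\ref{almostthere} and by the Erd\H{o}s--Gallai cycle-length bound. The only points that require attention are choosing $c$ so that the constant $18$ coming from Lemma~\ref{almostthere} is absorbed into the $9/10$-power, and verifying via the edge-count bookkeeping above that the greedy long-cycle phase uses only $O(n)$ cycles.
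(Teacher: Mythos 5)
Your argument is essentially the paper's own proof: greedily delete cycles of length at least $d/c$ (each removes at least $d/c$ edges, so $O(n)$ of them suffice), then apply Lemma~\ref{almostthere} to the remaining graph of circumference at most $d/c$; the paper does exactly this with $c=30$. Two small remarks: the citation of the Erd\H{o}s--Gallai extremal theorem is superfluous, since you delete long cycles only while they exist and the edge count alone bounds their number; and your constant is slightly off, as $24^{9/10}\approx 17.5<18$, so $c=24$ does not give $18(d/c)^{9/10}\leq d^{9/10}$ --- you need $c\geq 18^{10/9}\approx 24.9$, e.g.\ $c=25$ (or the paper's $30$), which still keeps $\ell=d/c>1$ under the hypothesis $d\geq 30$.
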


\begin{proof}
Repeatedly delete cycles of maximum length until there are no cycles of length greater than $d/30$. Because there are $dn/2$ edges, at most $(dn/2)/(d/30) = 15n$ cycles are deleted in this way. The circumference of what remains is at most $t = d/30 \geq 1$. Applying Lemma~\ref{almostthere}, we can delete $O(n)$ more cycles so that the graph that remains has average degree at most $18 t^{9/10} \leq d^{9/10}$.
\end{proof}

Theorem~\ref{firstmain} now follows by repeated application of Lemma~\ref{noreallyalmostthere}.

\vspace{2mm}

{\bf Proof of Theorem~\ref{firstmain}:} Repeatedly apply Lemma~\ref{noreallyalmostthere}, removing $O(n)$ cycles at each step. After $O(\log \log d)$ steps, the average degree of the remaining graph will be less than $30$. Once the average degree drops below $30$, we decompose the remaining graph into at most $15 n$ edges. Adding up the number of cycles and edges completes the proof.
{\hfill$\Box$\medskip}

\section{Proof of Lemma~\ref{lem:main}} \label{firstmainlemma}

We begin with the following lemma which says that it is possible, after deleting some edges, to partition the vertex set of a graph into components with a certain expansion property. For vertex subsets $X$ and $Y$, let $e(X,Y)$ denote the number of pairs in $X \times Y$ which are edges. If $X=\{x\}$, we will simply write $e(x,Y)$. 

\begin{lemma} \label{lem:expansion}
Given a graph $G$ on $n$ vertices and $s \in \mathbb{N}$, it is possible to delete at most $4 s n \log n$ edges from $G$ so that the remaining subgraph may be partitioned into components $C_1, C_2, \dots, C_r$ such that $e(C_i, C_j) = 0$ for all $i \neq j$ and the following expansion property holds. For all $i$ and all $X \subset C_i$ with $|X| \leq \frac{|C_i|}{2}$, $e(X, X^c) \geq s |X|$.
\end{lemma}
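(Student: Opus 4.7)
The plan is to prove the lemma by an iterative sparse-cut removal algorithm combined with a standard charging argument. Starting with $G' := G$, I repeatedly examine the connected components of $G'$: while some component $C$ of $G'$ contains a subset $X \subset C$ with $|X| \leq |C|/2$ and $e_{G'}(X, C\setminus X) < s|X|$, I delete those fewer than $s|X|$ crossing edges from $G'$. The process terminates since each step deletes at least one edge, and when it halts, every component $C_i$ of $G'$ satisfies the expansion condition required by the lemma, by construction. The entire task therefore reduces to bounding the total number of deleted edges.

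For this, I propose the following charging scheme: at each deletion step, assign a charge of $s$ to each vertex of the smaller side $X$. Since strictly fewer than $s|X|$ edges are deleted at that step while the total charge distributed is exactly $s|X|$, the total charge distributed throughout the process is at least the total number of deleted edges. It therefore suffices to show that no single vertex is charged more than $s\log_2 n$ times.

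The key observation is that whenever $v$ lies in the smaller side $X$ at some step, the connected component of $v$ in the updated graph is contained in $X$ (since the cut edges between $X$ and $C \setminus X$ have just been removed), so the size of the component containing $v$ drops from $|C|$ to at most $|X| \leq |C|/2$. In other words, each time $v$ is charged, the size of its component at least halves. Because $v$ starts in a component of size at most $n$ and always lies in a component of size at least one, this halving can occur at most $\log_2 n$ times. Summing the charges over all $n$ vertices yields a total of at most $sn\log_2 n \leq 4sn\log n$ deleted edges, as required.

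The argument is conceptually routine; the only point demanding real care is the decision to charge only the \emph{smaller} side of each cut, since this is precisely the restriction which forces the component size of every charged vertex to at least halve. A symmetric scheme that also charged vertices of $C \setminus X$ would fail to produce the logarithmic depth bound, because membership in the larger side does not force any significant drop in component size.
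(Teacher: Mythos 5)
Your proof is correct and follows essentially the same route as the paper: the identical greedy sparse-cut deletion algorithm, with the bound on deleted edges coming from the observation that a vertex on the smaller side of a cut sees its component at least halve. Your per-vertex charging is just a repackaging of the paper's dyadic counting of the small sides $X$ (which relies on the same halving fact to ensure each vertex lies in at most one $X$ per size class), and it even gives the slightly better bound $sn\log n$.
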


\begin{proof}
As long as some component $C$ has a set $X$ with $|X| \leq \frac{|C|}{2}$ and $e(X, X^c) < s |X|$, we delete all of the edges between $X$ and $X^c$. We continue until this is no longer possible. We now consider the number of edges deleted from those sets $X$ for which
\[\frac{n}{2^{j+1}} < |X| \leq \frac{n}{2^j}.\]
As, for all $j$, each vertex is in at most one such $X$, the number of such $X$ is at most $2^{j+1}$. Therefore, the number of deleted edges is at most
\[\sum_j 2^{j+1} \frac{n}{2^j} s \leq 4 \log n \cdot s n,\]
completing the proof.
\end{proof}

The following technical lemma is the key to our proof. It says that if a graph has the expansion property satisfied by the components in the last lemma then there is a vertex subset $U$ such that any two points in the complement of $U$ may be connected by a short path in $U$. Moreover, this is true even if certain vertices and edges of $U$ are not allowed to be in the path.

\begin{lemma}\label{lem:randomsub}
Let $G$ be a graph with $n$ vertices such that every set $X$, $|X| \leq \frac{n}{2}$ has $e(X, X^c) \geq s |X|$, where $s = 3 n^{8/9}$. Then there exists a set $U$ of order $u = 3 n^{8/9}$ such that, for all $x, y \in V(G) \char92 U$, $U$ has the following property. For all subsets $W$ of $U$ of size at most $2\sqrt{n}$ and all collections $E$ of at most $\frac{1}{2} n^{4/3}$ edges, the graph $G$ has a path from $x$ to $y$ of length at most $n^{2/9}$ all of whose internal vertices are in $U \char92 W$ and where no edge of the path with both endpoints in $U$ is in $E$.
\end{lemma}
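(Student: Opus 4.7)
The plan is to pick $U$ at random, show that with positive probability $G[U]$ inherits enough structure from the edge-expansion hypothesis on $G$, and then run BFS from $x$ and from $y$ (through $U \setminus W$, using edges outside $E$) so that the two balls meet well within the allowed $n^{2/9}$ steps.

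Concretely, I would take $U$ to be a uniformly random subset of $V$ of size $u = 3 n^{8/9}$, so each vertex lies in $U$ with probability $p = u/n = 3 n^{-1/9}$. Two properties of $U$ are needed, both holding with positive probability. First, every $v \in V$ should satisfy $|N(v) \cap U| \ge 4 n^{7/9}$; the singleton case of the expansion hypothesis forces $\deg_G(v) \ge s = 3 n^{8/9}$, so $E[|N(v) \cap U|] \ge 9 n^{7/9}$, and Chernoff plus a union bound over $v$ yields the property. Second, every $T \subset U$ with $|T| \le u/2$ should satisfy $e_G(T, U \setminus T) \ge c \cdot n^{7/9} |T|$ for an absolute constant $c > 0$; conditional on $T \subset U$, each vertex of $V \setminus T$ joins $U$ independently with probability at most $p$, so the conditional expectation is at least $p \cdot e_G(T, V \setminus T) \ge p s |T| = 9 n^{7/9} |T|$, and Chernoff concentration for weighted sums of independent Bernoullis combined with a union bound weighted by $P(T \subset U) \le p^{|T|}$ should give the uniform statement.

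Given such a $U$ and any valid $x, y, W, E$, let $T_k$ be the BFS ball of radius $k$ from $x$ in the graph obtained by restricting internal vertices to $U \setminus W$ and requiring each $U$-$U$ edge to avoid $E$. By the first property, $|T_1| \ge |N(x) \cap U| - |W| \ge 3 n^{7/9}$. For each $k \ge 1$ with $|T_k| \le u/2$, the second property gives $e_G(T_k, U \setminus T_k) \ge c n^{7/9} |T_k|$; subtracting the at most $|W| \cdot \Delta(G[U]) \le 2 \sqrt n \cdot u = O(n^{25/18})$ edges into $W$ and the at most $|E| \le \frac{1}{2} n^{4/3}$ deleted edges (both $o(n^{7/9} |T_k|)$ once $|T_k| \ge n^{7/9}$) leaves at least $\frac{c}{2} n^{7/9} |T_k|$ usable edges, and dividing by the trivial bound $\Delta(G[U]) \le u$ produces $\Omega(|T_k|/n^{1/9})$ new vertices. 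Thus $|T_k|$ grows by a factor $1 + \Omega(n^{-1/9})$ per step and exceeds $u/2$ after $O(n^{1/9} \log n) \ll n^{2/9}/2$ steps. Running the same BFS from $y$ yields $S_k$ of size $> u/2$; since $T_k$ and $S_k$ both lie in $U \setminus W$ and their sizes exceed $|U \setminus W|/2$, they must share a vertex $z$, and concatenating the $x$-to-$z$ and $z$-to-$y$ sub-paths gives a path of length at most $n^{2/9}$ with the required properties.

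The main obstacle will be the uniform edge-expansion property: in the worst case a naive Chernoff bound on $e_G(T, U \setminus T)$ gives an exponent only $\Omega(n^{7/9})$ that is independent of $|T|$ (since the edges from $T$ to $V \setminus T$ may concentrate on few high-degree vertices), and a direct union bound over all $T \subset U$ starts to fail once $|T|$ is of order $u$. Overcoming this will likely involve exploiting the fact that a failure of concentration for some $T$ forces $T$'s out-edges to be concentrated on very few vertices, a structural feature that either makes the BFS absorb all of them on the next step or else contradicts the edge-expansion hypothesis applied to $T \cup N(T)$. Once this is resolved, the remaining BFS argument reduces to checking that the losses from $W$ and from $E$ are dominated by the main expansion term throughout the BFS.
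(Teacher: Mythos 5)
There is a genuine gap, and it sits exactly where you place it yourself: the uniform expansion property of the random set $U$ (your second property, that every $T\subset U$ with $|T|\le u/2$ satisfies $e_G(T,U\setminus T)\ge c\,n^{7/9}|T|$) is the heart of the lemma, and your outline does not establish it. For a fixed $T$ the quantity $e_G(T,U\setminus T)$ is a sum of terms $e_G(v,T)\mathbf{1}[v\in U]$ whose individual sizes can be as large as $|T|$, so Chernoff--Hoeffding only gives a failure probability of roughly $\exp\bigl(-c'\,n^{7/9}\bigr)$, independent of $|T|$; meanwhile the number of candidate sets $T$ (even after weighting by $\Pr[T\subset U]\le p^{|T|}$) is of order $\exp\bigl(\Theta(n^{8/9})\bigr)$ when $|T|$ is comparable to $u$, so the union bound is off by a polynomial factor in the exponent. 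Your proposed repair --- that a failure of concentration forces $T$'s out-edges onto few vertices, which the BFS "absorbs" or which contradicts expansion of $T\cup N(T)$ --- is only a hope, not an argument: it is not even clear how to formulate the dichotomy so that it survives an adversarial choice of $T$ made after $U$ is revealed, and nothing in the proposal quantifies it. Since every later step of your BFS argument leans on this property, the proof as written does not go through. (The remainder --- minimum degree $\ge s$ from singleton expansion, the loss terms from $W$ and $E$ being $o(n^{7/9}|T_k|)$, the growth factor $1+\Omega(n^{-1/9})$, and the two balls meeting within $O(n^{1/9}\log n)\ll n^{2/9}$ steps --- is fine arithmetic, but it is all conditional on the unproved expansion of $G[U]$.)

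The paper avoids this obstacle by never asking for a statement that is uniform over exponentially many subsets of $U$. For each potential endpoint $x$ it builds, deterministically in $G$, nested layers $N_0=\{x\}\subset N_1\subset\cdots$, where a vertex enters the next layer only if it has at least $|N_i|/(2n/s)$ neighbours in $N_i$; the expansion hypothesis forces the layers to exhaust $V$ within $n^{2/9}$ steps. The only probabilistic input is then per-vertex: for each of the at most $n^2$ triples (endpoint $x$, vertex $z$, layer index $i$) one needs $e(z,U\cap N_i)\gtrsim n^{2/3}$, a sum of indicators with expectation $\Theta(n^{2/3})$, so Chernoff gives failure probability $e^{-\Omega(n^{2/3})}$ and a union bound over polynomially many events succeeds. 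The path from $y$ to $x$ is then found greedily by always stepping to a $U$-neighbour in a strictly earlier layer while avoiding $W$, the few vertices incident to many edges of $E$ inside $U$, and the edges of $E$; termination in at most $n^{2/9}$ steps is automatic because the layer index strictly decreases. If you want to salvage your route, you would need to prove that vertex sampling preserves edge expansion uniformly over all subsets of the sample, which is a substantially harder (and possibly false, at the stated strength) statement than anything the paper uses.
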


\begin{proof}
Let $N_0 =  \{x\}$. Let $N_1 = \{x\} \cup N(x)$. For any $i \geq 1$, let
\[M_i = \{ v \in V\char92 N_i : e(v, N_i) \geq \frac{|N_i|}{2 (n/s)}\} \mbox{ and } N_{i+1} = N_i \cup M_i.\]
Because the $N_i$ are nested, $|N_i| \geq |N_1| \geq s$.
By the definition of $M_i$,
\begin{align*}
e(N_i, M_i) & = e(N_i, N_i^c) - e(N_i, N_i^c \char92 M_i) \\
& \geq e(N_i, N_i^c) - \frac{|N_i| |N_i^c|}{2 (n/s)} \geq \frac{1}{2} e(N_i, N_i^c) \geq \frac{1}{2} \min\{|N_i|, |N_i^c|\} s,
\end{align*}
where we used that $e(N_i, N_i^c) \geq \min\{|N_i|, |N_i^c|\} s$ and $|N_i||N_i^c| \leq n \min\{|N_i|, |N_i^c|\}$.
Recall that $|N_i| \geq s$. Therefore, if $|N_i^c| \geq \frac{s}{2}$ then $e(N_i, M_i) \geq \frac{1}{2} \min\{|N_i|, |N_i^c|\} s \geq \frac{s^2}{4}$ and $|M_i| \geq \frac{e(N_i, M_i)}{|N_i|} \geq \frac{s^2}{4n}$. Otherwise, $M_i = N_i^c$. Indeed, each vertex has at least $s$ neighbors by the expansion property. Hence, each vertex in $N_i^c$ has at least $s - |N_i^c| > \frac{s}{2} \geq \frac{|N_i|}{2 (n/s)}$ neighbors in $N_i$ and hence must be in $M_i$. Since $s \geq s^2/4n$, it follows that the number of steps $i$ before we exhaust all vertices is at most
\[\left\lceil \frac{n}{s^2/4n} \right\rceil = \left\lceil \frac{4}{9} n^{2/9}\right\rceil \leq n^{2/9}.\]

\begin{claim}
There exists a set $U$ of order $u$ such that for all $x$ (which defines all $N_i$), all $i$ and all $z \in M_i = N_{i+1}\char92 N_i$,
\[e(z, B_i) \geq \frac{|N_i|}{4 (n/s)} \cdot \frac{|U|}{n} \geq \frac{s^2 u}{4 n^2} > 4 n^{2/3},\]
where $B_i = U \cap N_i$.
\end{claim}

\begin{proof}
Let $U$ be a vertex subset of order $u$ picked uniformly at random. Fix $x$, $i$ and $z$. Let $\mu = \mathbb{E} [e(z, B_i)] \geq \frac{|N_i|}{2 (n/s)} \cdot \frac{|U|}{n} \geq \frac{s^2 u}{2 n^2}$.
By Chernoff's inequality for the hypergeometric distribution (see \cite{AS, H63}), $e(z, B_i) < \frac{\mu}{2}$ with probability at most $e^{-\mu/8}$. Since each $z$ is in at most one $M_i$, there are at most  $n$ choices of $z$ and $i$. Hence, summing over all choices of $x$, $z$ and $i$, the union bound implies that the claim holds with probability at least $1 - n^2 e^{-\mu/8} \geq 1 - n^2 e^{- 27 n^{2/3}/16} > 0$.
\end{proof}

Fix a subset $W$ of $U$ of size at most $2 \sqrt{n}$ and a collection $E$ of at most $\frac{1}{2} n^{4/3}$ edges. Because $E$ has size at most $\frac{1}{2} n^{4/3}$, at most $n^{2/3}$ vertices are incident with more than $n^{2/3}$ edges of $E$ with both endpoints in $U$. Let $R$ be the set of such vertices and let $T = R \cup W$. Then $T \subset U$ and
$|T| \leq 2 \sqrt{n} + n^{2/3} \leq 3 n^{2/3}$.

We will build a path $y_0 = y, y_1, \dots, y_{\ell} = x$ which avoids $W$ and the edges of $E$ whose endpoints are in $U$ as follows. Note that there exists $j_0$ such that $y \in M_{j_0} \char 92 T$ as $y \not\in T \subset U$. We may choose the vertex $y_{i+1}$ to be in $U \cap M_{j_{i+1}}\char92 T$ where $j_{i+1} < j_i$. We can do this because the number of neighbors of $y_i$ in $B_{j_i}$ is at least $4 n^{2/3}$ and $T$ is at most $3 n^{2/3}$. Since the $j_i$ are always dropping, the process must terminate within $n^{2/9}$ steps.
\end{proof}

The next lemma says that if we have a decomposition of the edge set of a graph into a small number of paths then we also have a decomposition into a small number of paths and edges such that no vertex is the endpoint of too many paths.

\begin{lemma}\label{sparseends}
Suppose a graph $G$ has a decomposition into $h$ paths. Then $G$ can also be decomposed into $h$ subpaths of these paths and at most $2h$ edges so that each vertex is an endpoint of at most $\sqrt{2h}$ of the paths.
\end{lemma}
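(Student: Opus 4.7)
My plan is to trim one end-edge at a time from the current subpaths of the $h$ original paths. Each trim converts the chosen edge into a loose edge and either destroys its subpath (if it had length one) or shortens it by one, moving the endpoint one step inward along the original path. At every stage let $d(v)$ denote the number of current (non-empty) subpaths with endpoint $v$; since there are at most $h$ such subpaths, $\sum_v d(v) \leq 2h$ throughout. Set $T = \sqrt{2h}$ and call $v$ \emph{bad} if $d(v) > T$; the goal is to trim until no bad vertices remain, using at most $2h$ loose edges in the process.

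The key quantitative observation is that the overflow set $B := \{w : d(w) \geq T\}$ stays small: every $w \in B$ contributes at least $T$ to $\sum_v d(v) \leq T^2$, so $|B| \leq T$. I would call a trim at a bad vertex $v$ along some subpath $P$ \emph{safe} when either $P$ has length one (so the trim destroys $P$ and decreases $d(w)$) or the next vertex $w$ along $P$ satisfies $d(w) \leq T-1$ (so that $d(w)+1 \leq T$ after the trim). Since the subpaths are edge-disjoint, distinct subpaths ending at $v$ use distinct end-edges $vw$ and hence have distinct adjacent vertices $w$; therefore the number of \emph{unsafe} trims at $v$ is at most $|B \setminus \{v\}| \leq T-1$, leaving at least $d(v) - (T-1) \geq 1$ safe trim available.

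Next I would control the potential $\Phi := \sum_v (d(v) - T)_+$, initially at most $\sum_v d(v) \leq 2h$. A safe trim at bad $v$ drops $(d(v)-T)_+$ by at least one (since $d(v) > T$), while the contribution of $w$ to $\Phi$ does not grow: either $d(w)$ decreases in the length-one case, or $d(w)+1 \leq T$ keeps the excess at $w$ equal to zero. Hence $\Phi$ falls by at least one per safe trim, so the greedy procedure of repeatedly performing a safe trim at some bad vertex terminates after at most $2h$ trims, producing at most $2h$ loose edges. At termination $d(v) \leq T = \sqrt{2h}$ for every $v$, and the (at most $h$) surviving subpaths, padded with empty subpaths for any that were destroyed, together with the loose edges form the required decomposition.

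The main obstacle is the tight balancing of the bounds: $T = \sqrt{2h}$ is exactly what makes $|B| \leq T$ match the lower bound $d(v) - (T-1) \geq 1$ on the number of safe trims, leaving essentially no slack. Minor integer-rounding (distinguishing strictly bad vertices from ``almost-bad'' ones) is needed to make the counting fully precise, but the structural argument is unchanged.
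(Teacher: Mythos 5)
Your proposal is essentially the paper's proof: greedily delete an end-edge at an overloaded vertex so that the endpoint moves to a lightly-loaded neighbor, turning the deleted edge into a loose edge, and bound the number of deletions by $2h$ via a monotone count/potential. The integer-rounding you defer is real (for non-integer $T=\sqrt{2h}$ a neighbor with $d(w)=\lfloor T\rfloor$ is neither ``safe'' nor in $B$, and $(d(v)-T)_+$ may drop by less than $1$), but it is repaired exactly as in the paper: the $d(v)>\sqrt{2h}$ end-edges at a bad vertex $v$ go to distinct neighbors whose endpoint-counts sum to at most $2h-d(v)$, so some neighbor has count at most $(2h-\sqrt{2h})/\sqrt{2h}=\sqrt{2h}-1$, and using the integer threshold $\lfloor\sqrt{2h}\rfloor$ in the potential restores the drop of at least $1$ per step.
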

\begin{proof}
Let $r=\sqrt{2h}$. There are $2h$ endpoints of the $h$ paths. Call a vertex  {\it dangerous} if it is an endpoint of more than $r$ of the paths. Let $v$ be a {\it dangerous} vertex.
Then one of its (more than $r$) neighbors $u$ is an endpoint of at most $(2h-r)/r=r-1$ of the paths.  Delete the edge $(u,v)$ from one of the paths ending in $v$, thus moving the endpoint of this path from $v$ to $u$. Now $u$ is an endpoint of at most $r-1+1=r$ paths and hence still not dangerous. Therefore, by deleting the edge $(u,v)$, we have reduced the number of endpoints of paths which are dangerous.  Repeating, we can get rid of all dangerous endpoints by deleting at most $2h$ edges and obtain the desired partition.
\end{proof}

We may now combine the last two lemmas to show that if a graph has strong expansion properties then it is possible to partition all but $4n^{2 - 1/9}$ edges into at most $\frac{n}{2}$ cycles.

\begin{lemma} \label{lem:keycomp}
Let $G$ be a graph with $n$ vertices such that every set $X$ with $|X| \leq \frac{n}{2}$ has $e(X, X^c) \geq s |X|$, where $s = 3 n^{8/9}$. Then it is possible to partition all but $4 n^{2 - \frac{1}{9}}$ edges of $G$ into at most $\frac{n}{2}$ cycles.
\end{lemma}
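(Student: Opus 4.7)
The plan is to use the set $U$ guaranteed by Lemma~\ref{lem:randomsub} as a ``router'' in which to close up paths of the outside graph into cycles, while leaving essentially all edges incident to $U$ unpartitioned. The budget works: $u(n-u)+\binom{u}{2}\le 3n^{17/9}+\tfrac{9}{2}n^{16/9}\le 4n^{2-1/9}$ for $n$ large, so all edges meeting $U$ may in principle be discarded, except for the very few we actually use to close paths.

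First I would set $H:=G[V\setminus U]$ and apply Theorem~\ref{Lovaszlem} (Lov\'asz) to decompose $H$ into at most $(n-u)/2\le n/2$ paths and cycles. The cycles are kept as-is. To the $h$ resulting paths I apply Lemma~\ref{sparseends}, obtaining $h$ subpaths $P_1',\dots,P_h'$ together with at most $2h\le n$ isolated edges (which I leave unpartitioned), and with the guarantee that no vertex is an endpoint of more than $\sqrt{2h}\le \sqrt{n}$ of the $P_i'$. Writing $x_i,y_i\in V\setminus U$ for the endpoints of $P_i'$, I close each $P_i'$ into a cycle $C_i$ by using Lemma~\ref{lem:randomsub} to find a short path $Q_i$ from $x_i$ to $y_i$ whose internal vertices lie in $U$. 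Because $V(P_i')\subseteq V\setminus U$ while the internal vertices of $Q_i$ lie in $U$, the two paths meet only at $x_i$ and $y_i$, so $C_i=P_i'\cup Q_i$ is a simple cycle.

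The delicate step is making the $Q_i$'s pairwise edge-disjoint as I build them sequentially. I will maintain, for each potential endpoint $v\in V\setminus U$, a set $F_v\subseteq U$ of the vertices already chosen as the $U$-side of some cross edge incident to $v$; and a set $\mathcal{E}$ of the $U$-internal edges used so far. When constructing $Q_i$ I invoke Lemma~\ref{lem:randomsub} with $W:=F_{x_i}\cup F_{y_i}$ and $E:=\mathcal{E}$. Since each $|F_v|\le\sqrt{n}$ by Lemma~\ref{sparseends}, $|W|\le 2\sqrt{n}$; and since each $Q_j$ contributes at most $n^{2/9}$ edges to $\mathcal{E}$ across at most $n/2$ applications, $|\mathcal{E}|\le n^{11/9}/2<n^{4/3}/2$. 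Hence Lemma~\ref{lem:randomsub} applies. The constraint that the internal vertices of $Q_i$ lie outside $W$ forces the two cross edges of $Q_i$ to be fresh, and the constraint $E=\mathcal{E}$ forces its $U$-internal edges to be fresh, so the $C_i$'s are pairwise edge-disjoint and also disjoint from the original Lov\'asz cycles.

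Finally I count: the total number of cycles is (Lov\'asz cycles)$+h\le (n-u)/2\le n/2$, and the only unpartitioned edges are those of $G[U]$, the $(U,V\setminus U)$-edges not used by any $Q_i$, and the $\le 2h$ isolated edges from Lemma~\ref{sparseends}, bounded altogether by $\binom{u}{2}+u(n-u)+n\le 4n^{2-1/9}$. The main obstacle is exactly the sequential bookkeeping in the third paragraph: one must verify that $|W|\le 2\sqrt{n}$ and $|E|\le \tfrac12 n^{4/3}$ persist throughout, which is why the preparatory application of Lemma~\ref{sparseends} (controlling endpoint multiplicity) is indispensable, and why the parameter $u=3n^{8/9}$ in Lemma~\ref{lem:randomsub} was chosen to leave exactly enough slack.
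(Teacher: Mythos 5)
Your proof is correct and follows essentially the same route as the paper: reserve the set $U$ from Lemma~\ref{lem:randomsub}, apply Theorem~\ref{Lovaszlem} and Lemma~\ref{sparseends} to $G[V\setminus U]$, and close the subpaths through $U$ with the same $W$/$E$ bookkeeping and the same counts. (Your intermediate bound $\binom{u}{2}+u(n-u)$ can simply be replaced by $un\le 3n^{2-1/9}$, removing even the mild ``$n$ large'' caveat.)
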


\begin{proof}
Apply Lemma~\ref{lem:randomsub} to obtain a set $U$ with the desired properties. Set $U$ aside with all the edges touching it. Note that there are at most $|U| n \leq 3 n^{2 - \frac{1}{9}}$ such edges from $U$. Denote the remaining induced graph on vertex set $V(G)\char92 U$ by $G'$. By applying Theorem~\ref{Lovaszlem} to $G'$, we obtain an edge partition into at most $\frac{n}{2}$ paths and cycles. Set the cycles aside. Consider the paths $P_1, \dots, P_h$ with $h \leq \frac{n}{2}$. Applying Lemma~\ref{sparseends}, we decompose the union of the paths into subpaths $P'_i$ and at most $n$ edges such that each endpoint of $P'_i$ is the endpoint of at most $\sqrt{n}$ other paths $P'_j$.

Let $x_i, y_i$ be the endpoints of $P'_i$. We will find edge-disjoint paths from $x_i$ to $y_i$ with internal vertices in $U$ to close $P'_i$ to a cycle. Suppose that we have already obtained such paths for $j < i$ and we wish to obtain the relevant path for $i$. Delete from $G$ all the edges which were used to close $P'_j$, for all $j < i$, to a cycle. Since each of $x_i$ and $y_i$ are endpoints of at most $\sqrt{n}$ such paths, they each lose  at most $\sqrt{n}$ edges to $U$. Let $W$ be the set of endpoints of these edges in $U$. Then $W$ has size at most $2\sqrt{n}$. Moreover, to close every path $P'_j$ to a cycle, we used at most $n^{2/9}$ edges in $G[U]$. Therefore, we deleted at most $\frac{n}{2}n^{2/9}<\frac{1}{2} n^{4/3}$ such edges. By Lemma~\ref{lem:randomsub}, there is a path of length at most $n^{2/9}$ between $x_i$ and $y_i$ all of whose internal vertices are in $U$. This allows us to close $P'_i$ into a cycle. In the end, we will have covered all but at most $3 n^{2 - \frac{1}{9}} + n \leq 4 n^{2 - \frac{1}{9}}$ edges.
\end{proof}

Lemma~\ref{lem:main} now follows by applying Lemma~\ref{lem:keycomp} to each of the components given by Lemma~\ref{lem:expansion}.

\vspace{2mm}

{\bf Proof of Lemma~\ref{lem:main}:} Apply Lemma~\ref{lem:expansion} with $s = 3 n^{8/9}$ to delete at most $4 s n \log n$ edges from $G$ so that the remaining subgraph may be partitioned into components $C_1, C_2, \dots, C_r$ which have the following expansion property. For all $i$ and all $X \subset C_i$ with $|X| \leq \frac{|C_i|}{2}$, $e(X, X^c) \geq s |X|$. We may therefore apply Lemma~\ref{lem:keycomp} to each $C_i$ to partition all but $4 |C_i|^{2 - 1/9}$ edges into at most $\frac{|C_i|}{2}$ cycles. Overall, the number of edges we have deleted is at most
\begin{align*}
4s n \log n + \sum_{i=1}^r 4 |C_i|^{2 - \frac{1}{9}} & \leq 4sn \log n + 4 \left(\max_i |C_i|^{8/9}\right) \cdot \left(\sum_{i=1}^r |C_i|\right)\\ & \leq 4sn \log n + 4 n^{2 - \frac{1}{9}} \leq 16 n^{2 - \frac{1}{9}} \log n \leq n^{2 - \frac{1}{10}},
\end{align*}
where the last inequality holds for $n \geq 2^{2000}$. Since the number of cycles used is at most
\[\sum_{i=1}^r \frac{|C_i|}{2} = \frac{n}{2},\]
this completes the proof.\qed

\section{Random graphs}
\label{secrandom}

In this section, we will prove that the Erd\H{o}s-Gallai conjecture holds a.a.s.~in random graphs. We begin with an elementary lemma which says that if every subgraph of a graph $G$  contains relatively long cycles then $G$ can be decomposed into linearly many cycles and edges.

\begin{lemma}
Let $0<\alpha<1$. Suppose $G$ is a graph on $n$ vertices with the property that every subgraph of $G$ with $m \geq 2n$ edges contains a cycle of length at least $\alpha\frac{m}{n}\log^2 \frac{m}{n}$. Then $G$ can be decomposed into at most $6\alpha^{-1}n$ cycles and edges.
\end{lemma}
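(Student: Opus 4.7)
The plan is to iteratively remove a longest cycle from the current graph until fewer than $2n$ edges remain, and then take each of the surviving edges as its own part of the decomposition. At each stage, as long as the current subgraph has $m \ge 2n$ edges, the hypothesis (applied to the current subgraph, whose vertex set is contained in the $n$ vertices of $G$) supplies a cycle of length at least $\alpha(m/n)\log^2(m/n)$, which we strip off. So the only task is to bound the number of cycles extracted before $m$ drops below $2n$.

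For this I would split the run of the algorithm into phases indexed by the edge count. Say that the process is in \emph{phase $j$} (for an integer $j\ge 1$) during the period when the current edge count $m$ satisfies $2^j n \le m < 2^{j+1}n$. Throughout phase $j$ we have $m/n \ge 2^j$ and $\log(m/n) \ge j$, so each cycle extracted during phase $j$ removes at least $\alpha\cdot 2^j\cdot j^2$ edges. To leave phase $j$ downwards it suffices to delete $2^j n$ edges, so the number of cycles used during phase $j$ is at most
$$\left\lceil \frac{2^j n}{\alpha\, 2^j j^2}\right\rceil \le \frac{n}{\alpha j^2}+1.$$
Since the process begins at some phase $j \le \log(n/2) \le \log n$ (as $m \le \binom{n}{2}$), summing over phases gives a total cycle count of at most
$$\sum_{j=1}^{\lfloor\log n\rfloor}\Bigl(\frac{n}{\alpha j^2}+1\Bigr) \;\le\; \frac{\pi^2}{6}\cdot\frac{n}{\alpha} + \log n.$$

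When the process halts, the at most $2n-1$ surviving edges contribute one part each, so the total number of cycles and edges in the decomposition is at most $(\pi^2/6)(n/\alpha)+\log n+2n$. Since $0<\alpha<1$, we have $\log n\le n \le n/\alpha$ and $2n\le 2n/\alpha$, so this is bounded by $5n/\alpha\le 6n/\alpha$, as required.

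The plan is almost pure bookkeeping; the one point that must be seen for the argument to close is that the guaranteed cycle length in phase $j$ grows not only with $m/n$ but also with $\log^2(m/n)\ge j^2$, which is precisely what makes the per-phase contribution summable via $\sum j^{-2}=\pi^2/6$ instead of growing with the number of phases. So the main obstacle, such as it is, is noticing that one should bucket by the value of $m/n$ on a doubling scale and exploit the extra $\log^2$ factor in the hypothesis.
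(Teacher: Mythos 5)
Your proposal is correct and follows essentially the same argument as the paper: greedily strip longest cycles, bucket the edge count dyadically by the value of $m/n$, and use the $\log^2$ factor to make the per-phase cycle counts summable via $\sum_j j^{-2}$, finishing with at most $2n$ leftover edges. The only difference is minor bookkeeping (you bound the edges removed per phase by $2^jn$ rather than by the current $m$), which yields the same $O(\alpha^{-1}n)$ bound.
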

\begin{proof}
Greedily pull out longest cycles from $G$ until the remaining subgraph has at most $2n$ edges. To go from a subgraph with $m \leq 2^in$ edges to a subgraph with at most $2^{i-1}n$ edges, at most $$\frac{m}{\alpha2^{i-1}\log^2 2^{i-1}} \leq \frac{2n}{\alpha}(i-1)^{-2}$$ cycles are used.
Hence, at most $\sum_{i \geq 2} \frac{2n}{\alpha}(i-1)^{-2} = \frac{\pi^2}{3\alpha}n<4\alpha^{-1}n$ cycles are used. In total, at most $4\alpha^{-1}n+2n \leq 6\alpha^{-1}n$ cycles and edges are used to decompose $G$.
\end{proof}

We next introduce a concept of sparseness which will guarantee the existence of relatively long cycles in a graph and its subgraphs.

\begin{definition}\label{defsparse1}
For $0<\epsilon < 1 \leq \gamma$, we say that a graph $G$ is $(\epsilon,\gamma)$-sparse if for every $1 \leq v \leq |V(G)|$ every induced subgraph on $v$ vertices contains at most $\gamma v^{2-\epsilon}$ edges.
\end{definition}

Note that any subgraph of an $(\epsilon,\gamma)$-sparse graph is also $(\epsilon,\gamma)$-sparse.

\begin{lemma}
Let $G$ be an $(\epsilon,\gamma)$-sparse graph on $n$ vertices with $m$ edges. Then $G$ contains a cycle of length at least $(m/18 \gamma n)^{1/(1-\epsilon)}$.
\end{lemma}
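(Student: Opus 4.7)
The plan is to combine a standard minimum-degree extraction with Pósa's rotation-extension lemma (Lemma~\ref{Posa}) and the sparseness hypothesis, producing a long cycle in a single shot.

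Step one is to pass to a subgraph $H\subseteq G$ with $\delta(H)\geq m/n$. This is the usual greedy stripping: repeatedly delete any vertex of current degree less than $m/n$. Each deletion removes fewer than $m/n$ edges, so the process cannot remove all $n$ vertices (it would account for fewer than $m$ edges), hence it terminates at a nonempty subgraph $H$ with minimum degree at least $m/n$. Since $H$ is a subgraph of $G$ it is still $(\epsilon,\gamma)$-sparse, and any cycle of $H$ is a cycle of $G$; therefore it suffices to lower bound the circumference $L$ of $H$.

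Step two is to apply Lemma~\ref{Posa} to $H$ with $t=\lceil L/3\rceil$. Since $H$ has no cycle longer than $L\leq 3t$, we obtain a set $S\subseteq V(H)$ with $|S|\leq t$ and $|N_H(S)|\leq 2|S|$. Writing $T:=S\cup N_H(S)$ gives $|T|\leq 3|S|$, with the crucial property that every edge of $H$ incident to $S$ has both endpoints in $T$. Consequently
\[
\sum_{v\in S}\deg_H(v)\;\leq\;2\,e_H(T).
\]

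Step three is to plug in sparseness. Applied to the induced subgraph of $G$ on $T$, it yields $e_H(T)\leq e_G(T)\leq \gamma|T|^{2-\epsilon}\leq 9\gamma|S|^{2-\epsilon}$. Combined with the minimum-degree lower bound $\sum_{v\in S}\deg_H(v)\geq(m/n)|S|$, this produces
\[
\frac{m}{n}\,|S|\;\leq\;18\gamma\,|S|^{2-\epsilon},
\]
so $|S|\geq\bigl(m/(18\gamma n)\bigr)^{1/(1-\epsilon)}$. Since $|S|\leq L/3$, we conclude $L\geq 3|S|\geq\bigl(m/(18\gamma n)\bigr)^{1/(1-\epsilon)}$, as required.

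There is no substantive obstacle: the whole argument is the observation that Pósa's lemma localizes every edge incident to the witness set $S$ inside a piece of only $3|S|$ vertices, which the sparseness hypothesis limits to $O(\gamma|S|^{2-\epsilon})$ edges, and this upper bound on $\sum_{v\in S}\deg_H(v)$ collides with the $(m/n)|S|$ lower bound coming from the min-degree subgraph. The constant $18$ in the statement is exactly $2\cdot 3^{2-\epsilon}\leq 2\cdot 9$.
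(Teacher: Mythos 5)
Your proof is correct and is essentially the paper's own argument: pass to a subgraph of minimum degree at least $m/n$, apply Lemma~\ref{Posa} to the circumference, and play the sparseness bound on $S\cup N(S)$ against the degree sum of $S$ (the paper merely arranges the constants slightly differently, bounding $e(U)\geq\frac{1}{6}\frac{m}{n}|U|$ with $U=S\cup N(S)$). One trivial slip: since $|S|\leq\lceil L/3\rceil$ you cannot quite assert $L\geq 3|S|$, but the weaker bound $L\geq|S|\geq\bigl(m/(18\gamma n)\bigr)^{1/(1-\epsilon)}$ is all you need and holds, so the argument stands.
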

\begin{proof}
Let $G'$ be a subgraph of $G$ with minimum degree at least $m/n$ (it is easy to see that it exists) and let $t$
be the length of the longest cycle in $G'$. Then, by Lemma~\ref{Posa}, there is a set $S$ with $|S| \leq \lceil t/3 \rceil$ and $|N(S)| \leq 2|S|$. Let $U=S \cup N(S)$, so $|U| \leq 3|S| \leq t+ 2$.
Since $G$ is an $(\epsilon,\gamma)$-sparse graph, the number of edges in $U$ is at most $\gamma|U|^{2-\epsilon}$. However, $U$ has at least $\frac{1}{2}\frac{m}{n}|S| \geq \frac{1}{6}\frac{m}{n}|U| $ edges. Hence,
$$\gamma |U|^{2-\epsilon} \geq e(U) \geq \frac{m}{6n}|U|,$$
from which we get $$3t \geq t + 2 \geq |U| \geq \left(\frac{m}{6\gamma n}\right)^{1/(1-\epsilon)},$$
implying the required result.
\end{proof}

From the preceding two lemmas we have the following immediate corollary.

\begin{corollary} \label{bcor}
For each $0< \epsilon < 1 \leq \gamma$, there is $b > 0$ such that if a graph $G$ on $n$ vertices is $(\epsilon,\gamma)$-sparse then it can be decomposed into at most $b n$ cycles and edges.
\end{corollary}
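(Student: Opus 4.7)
The plan is simply to chain the two preceding lemmas, with one small uniform estimate to glue them. The first lemma yields a linear-in-$n$ decomposition as soon as every subgraph with $m \ge 2n$ edges contains a cycle of length at least $\alpha(m/n)\log^2(m/n)$; the second lemma says that for an $(\epsilon,\gamma)$-sparse graph the longest cycle already has length roughly $(m/n)^{1/(1-\epsilon)}$, which is a power of $m/n$ whose exponent exceeds $1$. So essentially the only thing to check is that this polynomial lower bound uniformly dominates the $(m/n)\log^2(m/n)$ threshold.

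First I would use the remark made just after Definition~\ref{defsparse1}: every subgraph $H$ of $G$ is again $(\epsilon,\gamma)$-sparse. Thus if $H$ has $v \le n$ vertices and $m \ge 2n$ edges, applying the second of the two lemmas to $H$ produces a cycle of length at least
$$\left(\frac{m}{18\gamma v}\right)^{1/(1-\epsilon)} \ge \left(\frac{m}{18\gamma n}\right)^{1/(1-\epsilon)}.$$

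Next I would choose $\alpha$. Setting $x = m/n \ge 2$, I need $\alpha > 0$ such that
$$\left(\frac{x}{18\gamma}\right)^{1/(1-\epsilon)} \ge \alpha\, x \log^2 x \quad\text{for all } x \ge 2.$$
Because $1/(1-\epsilon) > 1$, the ratio $(x/18\gamma)^{1/(1-\epsilon)} /(x\log^2 x)$ tends to $+\infty$ as $x \to \infty$, while at $x = 2$ it is continuous and strictly positive (using $\log_2 2 = 1$). Hence its infimum over $[2,\infty)$ is some strictly positive constant $\alpha_0 = \alpha_0(\epsilon,\gamma)$, and I would take $\alpha = \alpha_0$. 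Feeding this $\alpha$ into the first lemma then gives a decomposition of $G$ into at most $6\alpha^{-1} n$ cycles and edges, so the corollary holds with $b = 6\alpha_0^{-1}$.

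There is no real obstacle here beyond verifying that uniform choice of $\alpha$ — both lemmas do the substantive work, and this corollary is essentially a packaging step whose only mildly technical ingredient is the elementary observation that a power of $x$ with exponent greater than $1$ dominates $x \log^2 x$ on $[2,\infty)$.
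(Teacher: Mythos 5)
Your argument is correct and is exactly the deduction the paper has in mind: the corollary is stated there as an immediate consequence of the two preceding lemmas, via heredity of $(\epsilon,\gamma)$-sparseness and the fact that $(x/18\gamma)^{1/(1-\epsilon)}$ dominates $\alpha x\log^2 x$ for a suitable $\alpha=\alpha(\epsilon,\gamma)>0$. The only cosmetic point is that the first lemma assumes $0<\alpha<1$, so you should take $\alpha=\min(\alpha_0,1/2)$ rather than $\alpha_0$ itself, which only changes the constant $b$.
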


This corollary is already enough to prove the Erd\H{o}s-Gallai conjecture in sufficiently sparse random graphs.

\begin{lemma}\label{gnqlem}
Suppose $0< \epsilon<1/2$ and $\gamma = 2/\epsilon$. Then, for $q \leq n^{-\epsilon}$, $G(n,q)$ is almost surely $(\epsilon,\gamma)$-sparse. Consequently, $G(n,q)$ and its subgraphs can each be decomposed into at most $b n$ cycles and edges, where $b$ depends only on $\epsilon$.
\end{lemma}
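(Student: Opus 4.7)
My plan is to first establish the sparseness claim via a first-moment argument, then invoke Corollary~\ref{bcor} to obtain the decomposition. Let $X_v$ count the vertex subsets of size $v$ inducing more than $\gamma v^{2-\epsilon}$ edges in $G(n,q)$; it suffices to prove $\sum_v\mathbb{E}[X_v]=o(1)$.

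For $v\le v_0:=(4/\epsilon)^{1/\epsilon}$ the deterministic inequality $\binom{v}{2}\le v^2/2\le \gamma v^{2-\epsilon}$ already forces $X_v=0$. For $v>v_0$, the induced edge-count $e(S)$ is $\operatorname{Bin}(\binom{v}{2},q)$ with mean at most $v^{2-\epsilon}/2$, and a standard binomial tail bound gives
\[
\mathbb{P}\!\left[e(S)\ge \gamma v^{2-\epsilon}\right]\le\binom{\binom{v}{2}}{\gamma v^{2-\epsilon}}q^{\gamma v^{2-\epsilon}}\le\left(\frac{e\epsilon}{4}\Bigl(\frac{v}{n}\Bigr)^{\!\epsilon}\right)^{\!(2/\epsilon)\,v^{2-\epsilon}},
\]
using $q\le n^{-\epsilon}$ and $\gamma=2/\epsilon$. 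Multiplying by $\binom{n}{v}\le(en/v)^v$ and taking logs,
\[
\log\mathbb{E}[X_v]\le v\log(en/v)-c_\epsilon\, v^{2-\epsilon}-2\,v^{2-\epsilon}\log(n/v),
\]
where $c_\epsilon:=(2/\epsilon)\log(4/(e\epsilon))>0$.

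I would then bound this in two regimes. For $v_0<v\le n/2$ the factor $\log(n/v)$ is $\Omega(1)$, so the term $-2v^{2-\epsilon}\log(n/v)$ dominates $v\log(en/v)$ (since $v^{1-\epsilon}\ge 1$), yielding $\mathbb{E}[X_v]\le n^{-\Omega(v)}$. For $n/2<v\le n$, $\log(n/v)$ may be tiny, but $c_\epsilon v^{2-\epsilon}=\Omega(n^{2-\epsilon})$ already swamps $\log\binom{n}{v}\le n$. Summing gives $\sum_v\mathbb{E}[X_v]=o(1)$, so $G(n,q)$ is a.a.s.~$(\epsilon,\gamma)$-sparse; applying Corollary~\ref{bcor} to $G(n,q)$ and to each of its subgraphs (which, as noted right after Definition~\ref{defsparse1}, are also $(\epsilon,\gamma)$-sparse) then gives the decomposition into at most $bn$ cycles and edges. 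The main delicate point is engineering the Chernoff estimate so that both regimes work at once: for small $v$ the $(v/n)^\epsilon$ factor in the tail beats the counting weight $\binom{n}{v}$, whereas for $v$ near $n$ that factor vanishes and one must lean on the constant $c_\epsilon$ coming from $\gamma=2/\epsilon$.
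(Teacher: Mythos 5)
Your proposal is correct and takes essentially the same route as the paper: a first-moment/union bound over vertex subsets using $\binom{a}{t}q^{t}\le(eaq/t)^{t}$ with $\gamma=2/\epsilon$ and $q\le n^{-\epsilon}$, followed by Corollary~\ref{bcor} together with the fact that subgraphs inherit $(\epsilon,\gamma)$-sparseness. The only difference is bookkeeping: the paper collapses the whole sum over $v$ into the single uniform per-$v$ bound $(en/v)^{-v^{2-\epsilon}}$, whereas you split into regimes and treat $v\le(4/\epsilon)^{1/\epsilon}$ deterministically (a cosmetic point: for $v$ near $n/2$ the dominance comes from $v>v_0$ and the $-c_\epsilon v^{2-\epsilon}$ term rather than just $v^{1-\epsilon}\ge 1$, but your claimed bound $\mathbb{E}[X_v]\le n^{-\Omega(v)}$ is indeed correct).
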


\begin{proof}
As the goal is to show that $G(n,q)$ almost surely has the desired properties, we may assume that $n$ is sufficiently large. Consider an induced subgraph of $G(n,q)$ on $v$ vertices, so it has at most $a:={v \choose 2}$ potential edges. The probability that this induced subgraph contains at least $t:=\gamma v^{2-\epsilon}$ edges is at most
$${a \choose t}q^{t} \leq \left(\frac{qea}{t}\right)^{t} \leq \big((2/e)\gamma (n/v)^{\epsilon}\big)^{-t} \leq \big(2 (n/v)^{\epsilon}\big)^{-t} \leq (en/v)^{-2v^{2-\epsilon}}.$$
As there are ${n \choose v} \leq \left(\frac{en}{v}\right)^v$ subsets of order $v$, the probability that there is a subset of order $v$ with at least $t$ edges is at most $$ \left(en/v\right)^v  \left(en/v\right)^{-2 v^{2-\epsilon}} \leq  \left(en/v\right)^{-v^{2-\epsilon}} .$$
Summing over all $1 \leq v \leq n$, we have that almost surely $G(n,q)$ is $(\epsilon,\gamma)$-sparse. Since all subgraphs of an $(\epsilon, \gamma)$-sparse graph are also $(\epsilon, \gamma)$-sparse, Corollary~\ref{bcor} gives the required conclusion.
\end{proof}

In order to deal with dense random graphs, we note the following property of $G(n, q)$.

\begin{lemma}\label{lemcodeg}
For $q = n^{-\epsilon}$ with $0 < \epsilon \leq 1/5$, the following property holds almost surely in $G(n,q)$. Every pair of distinct vertices have at least $\frac{1}{2} q^2 n$ common neighbors.
\end{lemma}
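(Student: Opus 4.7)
The plan is a standard Chernoff plus union bound argument, exploiting the fact that $q^2 n = n^{1-2\epsilon} \geq n^{3/5}$ grows polynomially when $\epsilon \leq 1/5$, so the tail bound on the number of common neighbors of a single pair beats the $\binom{n}{2}$ union bound by a large margin.

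First I would fix an arbitrary pair of distinct vertices $u, v \in [n]$ and let $X_{uv}$ denote the number of common neighbors of $u$ and $v$ in $G(n,q)$. For each third vertex $w \in [n] \setminus \{u,v\}$, the event ``$w$ is adjacent to both $u$ and $v$'' has probability exactly $q^2$, and these events are mutually independent across $w$ since they involve disjoint sets of potential edges. Therefore $X_{uv}$ is a sum of $n-2$ independent Bernoulli$(q^2)$ random variables with mean $\mu := (n-2) q^2$. For $n$ large enough we have $\mu \geq \tfrac{3}{4} q^2 n$, so $\tfrac{1}{2} q^2 n \leq \tfrac{2}{3}\mu$.

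Next I would apply the standard multiplicative Chernoff lower-tail inequality:
\[
\Pr\!\left[X_{uv} < \tfrac{1}{2} q^2 n\right] \;\leq\; \Pr\!\left[X_{uv} < \tfrac{2}{3} \mu\right] \;\leq\; \exp\!\left(-\tfrac{\mu}{18}\right) \;\leq\; \exp\!\left(-\tfrac{1}{24} q^2 n\right).
\]
Now plug in $q = n^{-\epsilon}$ with $\epsilon \leq 1/5$: the exponent satisfies $q^2 n = n^{1 - 2\epsilon} \geq n^{3/5}$, which grows super-polynomially in $n$.

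Finally I would take a union bound over the $\binom{n}{2} \leq n^2$ ordered pairs of distinct vertices. The probability that there exists some pair $(u,v)$ with fewer than $\tfrac{1}{2} q^2 n$ common neighbors is at most
\[
n^2 \exp\!\left(-\tfrac{1}{24} n^{3/5}\right) \;=\; o(1).
\]
Hence $G(n,q)$ a.a.s.\ has the stated property. There is no real obstacle here; the only thing worth being careful about is ensuring the $-2$ in $\mu = (n-2)q^2$ does not interfere with the constant $\tfrac{1}{2}$ in the statement, which is handled by the cushion provided by Chernoff (we proved every pair has at least $\tfrac{2}{3}\mu \geq \tfrac{1}{2} q^2 n$ common neighbors, comfortably above the claimed threshold).
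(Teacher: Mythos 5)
Your proposal is correct and follows essentially the same route as the paper: a Chernoff lower-tail bound on the codegree of a fixed pair (mean $(n-2)q^2$), followed by a union bound over the $\binom{n}{2}$ pairs, using $q^2 n \geq n^{3/5}$ to kill the polynomial factor. The only differences are immaterial constants in the exponent.
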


\begin{proof}
For fixed distinct vertices $u$ and $v$, the expected codegree of $u$ and $v$ is $q^2(n-2)$. Therefore, by Chernoff's inequality, the probability that the codegree of $u$ and $v$ is less than $\frac{1}{2} q^2 n < \frac{3}{4} q^2 (n-2)$ is at most $e^{-q^2 (n -2)/32}$. Taking the union bound over all $\binom{n}{2}$ choices of $u$ and $v$ gives the result.
\end{proof}

We will also need the following simple combination of Theorem \ref{Lovaszlem} and Lemma \ref{sparseends} which was already used in the proof of Lemma~\ref{lem:keycomp}.

\begin{corollary}\label{corobv}
Every graph $G$ can be decomposed into at most $n/2$ cycles and paths and $n$ edges so that each vertex is an endpoint of at most $\sqrt{n}$ of the paths.
\end{corollary}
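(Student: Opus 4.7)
The plan is to directly combine the two ingredients cited, namely Theorem~\ref{Lovaszlem} (Lovász's path-cycle decomposition) and Lemma~\ref{sparseends} (the endpoint-spreading trick), exactly as was already done inside the proof of Lemma~\ref{lem:keycomp}.

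First I would apply Theorem~\ref{Lovaszlem} to $G$, producing an edge-decomposition of $G$ into at most $n/2$ pieces, each of which is either a path or a cycle. Let $h$ denote the number of paths in this decomposition; clearly $h \leq n/2$, and the remaining at most $n/2 - h$ pieces are cycles which I simply set aside (they contribute to the $n/2$ cycles-and-paths budget of the corollary).

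Next I would apply Lemma~\ref{sparseends} to the collection of the $h$ paths. That lemma allows us to replace the $h$ paths by $h$ subpaths of them together with at most $2h$ extra single edges, in such a way that each vertex is an endpoint of at most $\sqrt{2h}$ of the resulting subpaths. Since $h \leq n/2$, we have $\sqrt{2h} \leq \sqrt{n}$, and the number of extra edges is at most $2h \leq n$.

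Adding the set-aside cycles back in, the full edge set of $G$ is now decomposed into (cycles) $\cup$ (subpaths of the original paths) $\cup$ (leftover edges), where the first two collections together number at most $n/2$ and the third numbers at most $n$. Each vertex is an endpoint of at most $\sqrt{n}$ of the subpaths, which is exactly the claimed statement. There is no substantive obstacle here; everything is a direct quotation of the two named results, and the only minor bookkeeping is the bound $\sqrt{2h}\le\sqrt{n}$, which is immediate from $h\le n/2$.
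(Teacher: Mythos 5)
Your proposal is correct and matches the paper's intended argument exactly: the corollary is stated there as a direct combination of Theorem~\ref{Lovaszlem} and Lemma~\ref{sparseends}, just as in the proof of Lemma~\ref{lem:keycomp}, with the same bookkeeping $h\le n/2$, $2h\le n$, and $\sqrt{2h}\le\sqrt{n}$. Nothing further is needed.
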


We are now ready to prove Theorem~\ref{thmrandom}, that the Erd\H{o}s-Gallai conjecture holds almost surely in random graphs.

\vspace{2mm}

\noindent {\bf Proof of Theorem \ref{thmrandom}:}
Let $\epsilon = 1/5$ and $q = n^{-\epsilon}$. If $p \leq q$ then the theorem follows from Lemma~\ref{gnqlem}. Otherwise, we partition the edges of $G(n,p)$ into two graphs $G_1$ and $G_2$ by choosing each edge of $G(n,p)$ independently with probability $q/p$ to form $G_1$ and letting $G_2$ be the complement of $G_1$ in $G(n,p)$. The resulting graphs $G_1$ and $G_2$ are isomorphic to a $G(n,q)$ and a $G(n,p-q)$, respectively. We note that the resulting graphs are not independent random graphs but their individual distributions are identical to the required binomial random graphs. We now apply Corollary~\ref{corobv} to edge partition $G(n,p- q)$ into at most $n/2$ cycles and paths and $n$ edges such that each vertex is an endpoint of at most $\sqrt{n}$ paths. We will use the cycles and edges arising from this procedure (at most $3n/2$ of them) in our final partition. For each of the at most $\frac{n}{2}$ paths in our decomposition of $G(n,p- q)$,  we will greedily pick edge-disjoint paths of length $2$ in $G(n,q)$ that connect their endpoints. We will further require that each vertex is used as the middle vertex of at most $\sqrt{n}$ of these paths.

Suppose that we have achieved our goal for the first $i - 1$ paths and we wish to connect the endpoints $u_i$ and $v_i$ of the $i$th path $P_i$. Then $u_i$ and $v_i$ are each endpoints of at most $\sqrt{n}$ paths. They are each also internal vertices of at most $\sqrt{n}$ paths of length $2$ which we added. Therefore, at most $6 \sqrt{n}$ edges incident to $u_i$ or $v_i$ are already used on paths of length $2$ in $G(n,q)$. Finally, we note that a vertex cannot be used as an internal vertex of any future path of length $2$ if it is already the internal vertex of $\lfloor \sqrt{n} \rfloor$ paths of length $2$. Therefore, there are at most $\frac{n/2}{\lfloor \sqrt{n} \rfloor} \leq \sqrt{n}$ further vertices which cannot be used as internal vertices of paths of length $2$. By Lemma~\ref{lemcodeg}, each pair of vertices in $G(n,q)$ has at least $q^2 n/2 = n^{3/5}/2$ paths of length $2$ between them. Of these, at most $7 \sqrt{n}$ cannot be used due to containing an edge which has already been used or an internal vertex which has been used the maximum number of times. Therefore, since $n$ is sufficiently large, we may find the required path. Since every path completes to either a cycle or a pair of cycles, we get at most $n$ additional cycles. We are now left with a subgraph of $G(n,q)$ and a further application of Lemma~\ref{gnqlem} to this subgraph completes the proof.
\qed

\section{Highly connected graphs}
\label{sectcutdense}

In this section, we will prove some preliminary results that we will need in order to establish Theorem \ref{EGlargemin}. The proof of that result, which states that the Erd\H{o}s-Gallai conjecture holds for graphs of linear minimum degree, will be given in the next section.

The following notion of a graph being dense across cuts will be crucial in what follows.

\begin{definition} \label{defdense1}
A graph $G=(V,E)$ is {\it $d$-cut dense} if every vertex partition $V=V_1 \cup V_2$ satisfies $e(V_1,V_2) \geq d|V_1||V_2|$, i.e., $G$ has density at least $d$ across every cut.
\end{definition}

The main result in this section is Theorem \ref{thmcutdense}, which shows that every $d$-cut dense graph on $n$ vertices can be decomposed into $O(n/d)$ cycles and edges.

We first establish a lemma showing that if the neighborhoods of each pair of vertices in a graph can be connected by many short edge-disjoint paths then we can connect given pairs of vertices by short edge-disjoint paths so that no vertex is used many times internally on the paths. Note that here and throughout what follows, the length of a path will count the number of edges in the path.

\begin{lemma}\label{nice0}
Suppose $G=(V,E)$ is a graph with maximum degree $\Delta$ such that every pair $u,v$ of vertices in $V$ have at least $12\max\left(\sqrt{t\ell},r\right)\Delta+3t\ell$ edge-disjoint paths of length at most $\ell$ between $N(u)$ and $N(v)$. Suppose we are given $t$ pairs $(u_i,v_i)$ for $1 \leq i \leq t$ such that no vertex is in more than $r$ of the pairs. Then there are edge-disjoint paths $s_1,\ldots,s_t$ in $G$, each of length at most $\ell + 2$, such that $s_i$ has endpoints $u_i$ and $v_i$ and each vertex is an internal vertex of at most $B:=\lceil \frac{1}{2}\sqrt{t \ell} \rceil$ of the paths.
\end{lemma}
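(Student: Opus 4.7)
The natural plan is a greedy construction: process the pairs $(u_i,v_i)$ in the given order, and at step $i$ choose one of the at least $N:=12\max(\sqrt{t\ell},r)\Delta+3t\ell$ guaranteed edge-disjoint $N(u_i)$--$N(v_i)$ paths $P$ of length at most $\ell$, then extend $P$ by the two edges $(u_i,w)$ and $(w',v_i)$, where $w,w'$ are the endpoints of $P$, to obtain a path $s_i$ from $u_i$ to $v_i$ of length at most $\ell+2$. Throughout, I maintain the invariant that no vertex has yet served as an internal vertex of more than $B=\lceil\tfrac12\sqrt{t\ell}\rceil$ of the constructed paths and that the $s_j$ are mutually edge-disjoint.

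The heart of the argument is to show, at each step, that the number of candidate pool paths $P$ for which the resulting $s_i$ would violate one of these conditions is strictly less than $N$. Call a pool path $P$ \emph{bad} if (i) some interior edge of $P$ coincides with an edge of a previous $s_j$, (ii) one of the extension edges $(u_i,w)$ or $(w',v_i)$ has already been used on a previous $s_j$, or (iii) the resulting $s_i$ has an already-saturated internal vertex, where ``internal vertex of $s_i$'' crucially includes $w$ and $w'$, which cease to be endpoints after extension.

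For (i), edge-disjointness of the pool yields at most $\sum_{j<i}|E(s_j)|\le t(\ell+2)\le 3t\ell$ bad paths. For (ii), the number of edges already used at $u_i$ is at most $r+2B$ (since $u_i$ lies in at most $r$ pairs and is internal to at most $B$ prior $s_j$), and each such bad edge disqualifies at most $\Delta$ pool paths (namely those whose $u_i$-side endpoint is its other endpoint, one per edge at that vertex by edge-disjointness); the symmetric bound at $v_i$ gives at most $2(r+2B)\Delta$ bad paths. For (iii), the total internal-vertex count over prior $s_j$ is at most $t(\ell+1)$, so by the invariant at most $t(\ell+1)/B\le 4\sqrt{t\ell}$ vertices can currently be saturated; a single vertex appears on at most $O(\Delta)$ pool paths (in any of the roles $w$, $w'$, or interior of $P$, again by edge-disjointness), contributing $O(\sqrt{t\ell}\,\Delta)$ bad paths. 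Summing the three contributions and substituting $B\le\tfrac12\sqrt{t\ell}+1$, the total is bounded by $3t\ell+\bigl(2r+O(\sqrt{t\ell})\bigr)\Delta$, which is less than $N$ because the constant $12$ in the hypothesis is calibrated precisely to absorb these terms. Thus a good $P$ exists at every step, and the induction goes through.

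The main subtlety I expect is the bookkeeping for (ii) and (iii): the endpoints $w,w'$ of $P$ play double duty, since they simultaneously force fresh edges at $u_i,v_i$ \emph{and} become internal vertices of $s_i$. Each saturated vertex and each already-used extension edge therefore blocks up to a multiplicative $\Delta$ pool paths rather than only a constant. Getting this factor of $\Delta$ to combine cleanly with the saturation count $O(\sqrt{t\ell})$ and the endpoint-repetition bound $r$ is what dictates the precise shape of the hypothesis $12\max(\sqrt{t\ell},r)\Delta+3t\ell$, and is the only genuinely delicate part of the argument.
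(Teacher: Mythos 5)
Your proposal is correct and is essentially the paper's own argument: the same greedy construction, the same three classes of disqualified pool paths (used edges, blocked extension edges at $u_i,v_i$, and saturated internal vertices including the pool path's endpoints), and the same counting $t(\ell+2)+2(2B+r)\Delta+\frac{2t\ell}{B}\Delta$ absorbed by the constant $12$. The only point the paper makes explicit that you omit is to first shorten each pool path so that it avoids $u_i$ and $v_i$, which guarantees that the extended walk $s_i$ is a genuine path; this is a one-line fix and does not affect your estimates.
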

\begin{proof}
We will greedily find the desired paths $s_1,\ldots,s_t$ in order of index. Recall that there is a collection $P_i$ of at least $12\max\left(\sqrt{t\ell},r\right)\Delta + 3t \ell$ edge-disjoint paths from $N(u_i)$ to $N(v_i)$, each of length at most $\ell$. By possibly shortening some of the paths in $P_i$, we may assume that none of them contain $u_i$ or $v_i$. Call a vertex $v$ {\it dangerous} if it is used internally on $B$ of the paths already picked from $G$. Thus, dangerous vertices cannot be used as internal vertices on any of the paths that have not yet been embedded. If there is a path $p \in P_i$ not containing a dangerous vertex and with endpoints $u' \in N(u_i)$ and $v' \in N(v_i)$ such that none of the edges of $p$ nor the edges $(u',u_i)$ and $(v',v_i)$ are in any $s_j$ with $j<i$, then we can take the path $s_i$ from $u_i$ to $v_i$ to consist of $(u_i,u'),p,(v',v_i)$. This would complete the proof. Thus, it suffices to find such a path $p \in P_i$.

As $u_i$ and $v_i$ are internally on at most $B$ of the paths, at most $2B+r$ of the edges containing $u_i$ are already used on paths, and similarly for $v_i$. Thus, at most $2(2B+r)\Delta$ of the paths in $P_i$ contain a vertex $w$ such that the edge $(u_i,w)$ or $(v_i,w)$ is already used on one of the paths. As there are at most $t$ paths, each of length at most $\ell + 2$, there are at most $\frac{t(\ell + 1)}{B} \leq \frac{2 t \ell}{B}$  dangerous vertices. Therefore, at most $\frac{2t\ell}{B}\Delta$ of the paths in $P_i$ contain a dangerous vertex. Finally, fewer than $t(\ell+2)$ edges are on any of the paths $s_j$ with $j<i$. Hence, at least
$$|P_i|-\left(4B+2r+\frac{2t\ell}{B}\right)\Delta-t(\ell+2)>|P_i|-(6\sqrt{t\ell}+2r + 4)\Delta  -t(\ell+2)\geq 0$$
of the paths in $P_i$  do not contain a dangerous vertex, do not contain an edge used in some path $s_j$ with $j < i$ and do not contain a vertex $w$ such that $(u_i,w)$ or $(v_i,w)$ is already an edge picked in some path $s_j$ with $j<i$.  Hence, we can find the desired path $p \in P_i$, completing the proof.
\end{proof}

We now use Corollary \ref{corobv} and Lemma \ref{nice0} to prove the following lemma, which establishes the Erd\H{o}s-Gallai conjecture for graphs having a subgraph with certain properties.

\begin{lemma}\label{nicestep1}
Suppose $G=(V,E)$ is a graph on $n$ vertices with a subgraph $G'=(V,E')$ of maximum degree $\Delta$ with the following properties:
\begin{itemize}
\item every subgraph of $G'$ can be decomposed into at most $bn$ cycles and edges, and
\item for each pair of vertices $u, v$, there are at least $12\sqrt{\ell n}\Delta+2\ell n$ edge-disjoint paths from $N(u)$ to $N(v)$ of length at most $\ell$ in $G'$.
\end{itemize}
Then $G$ can be decomposed into at most $(2+\frac{\ell}{2}+b)n$ cycles and edges.
\end{lemma}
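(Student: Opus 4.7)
The plan is to decompose $G$ in three phases and count carefully. First, apply Corollary~\ref{corobv} to the spanning subgraph $G \setminus G' := (V, E \setminus E')$ to obtain a decomposition into $c_1$ cycles and $t$ paths $P_1, \ldots, P_t$ (with $c_1 + t \leq n/2$) plus at most $n$ stray edges, arranged so that each vertex is an endpoint of at most $\sqrt{n}$ of the paths. Second, apply Lemma~\ref{nice0} to $G'$ with the $t$ endpoint-pairs $(u_i, v_i)$ of the $P_i$'s and with $r = \sqrt{n}$. The hypothesis of Lemma~\ref{nice0} requires $12\max(\sqrt{t\ell},r)\Delta + 3t\ell$ edge-disjoint paths of length $\leq \ell$ between $N(u)$ and $N(v)$; since $t \leq n/2$ and $r = \sqrt{n}$, this quantity is at most $12\sqrt{\ell n}\Delta + \frac{3}{2}\ell n$, which is bounded above by the $12\sqrt{\ell n}\Delta + 2\ell n$ guaranteed by the second hypothesis on $G'$. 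Lemma~\ref{nice0} then produces edge-disjoint paths $s_1, \ldots, s_t$ in $G'$, each of length at most $\ell + 2$, with $s_i$ having the same endpoints as $P_i$. Third, apply the first hypothesis to the subgraph $G''$ of $G'$ consisting of the edges of $G'$ not used in any $s_i$, decomposing $G''$ into at most $bn$ cycles and edges.

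The crucial step is the combinatorial analysis of each union $P_i \cup s_i$. Since $P_i \subseteq G \setminus G'$ and $s_i \subseteq G'$, these edge sets are automatically disjoint, and their union is a connected Eulerian graph in which vertices shared between $P_i$ and $s_i$ as internal vertices have degree $4$ and all other vertices have degree $2$. Therefore, $P_i \cup s_i$ admits a decomposition into edge-disjoint simple cycles. Moreover, since $P_i$ alone is acyclic, every such simple cycle must contain at least one edge of $s_i$, so the number of cycles produced is at most $|s_i| \leq \ell + 2$.

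Combining everything, the cycles and paths from the first phase contribute, after closing, at most $c_1 + (\ell+2)t \leq (\ell+2)(c_1+t) \leq (\ell+2)(n/2) = \ell n/2 + n$ simple cycles. Adding the $n$ stray edges from the first phase and the $bn$ cycles and edges from the third phase yields the required total of $(2 + \ell/2 + b)n$. The main obstacle is that $P_i$ and $s_i$ may share many internal vertices, causing $P_i \cup s_i$ to decompose into many simple cycles; this is precisely controlled by the bound $|s_i| \leq \ell + 2$, which is the source of the $\ell/2$ term in the final count.
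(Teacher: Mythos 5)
Your proposal is correct and follows essentially the same route as the paper: decompose $E\setminus E'$ via Corollary~\ref{corobv}, close each path with an edge-disjoint path in $G'$ from Lemma~\ref{nice0} (with $r=\sqrt{n}$), decompose each closed walk into at most $\ell+2$ cycles, and finish by decomposing the remaining subgraph of $G'$ using the first hypothesis, with the same final count. Your extra checks (that $12\max(\sqrt{t\ell},r)\Delta+3t\ell\leq 12\sqrt{\ell n}\Delta+2\ell n$ and that each cycle in $P_i\cup s_i$ uses an edge of $s_i$) are just more explicit versions of steps the paper leaves implicit.
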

\begin{proof}
By Corollary \ref{corobv}, there is a partition of $E\setminus E'$ into at most $n/2$ cycles and paths
and $n$ edges so that each vertex is an endpoint of at most $r=\sqrt{n}$ of the paths. Arbitrarily order the paths in the partition as $p_1,\ldots,p_t$, where $t \leq n/2$. Let $u_i$ and $v_i$ be the endpoints of the path $p_i$.

By Lemma \ref{nice0}, there are edge-disjoint paths $s_1,\ldots,s_t$ in $G'$, each of length at most $\ell + 2$ with $s_i$ having endpoints $(u_i,v_i)$. The union of $s_i$ and $p_i$ is a closed walk and, since $s_i$ has length at most
$\ell + 2$, the union of $s_i$ and $p_i$ can be decomposed into at most $\ell + 2$ cycles. As the remaining edges in $G'$ can be partitioned into at most $bn$ cycles and edges, this results in a total of at most $n+(\ell + 2)\frac{n}{2}+bn=(2+\frac{\ell}{2}+b)n$ cycles and edges in the decomposition of $G$.
\end{proof}

The main result in this section is the following theorem.

\begin{theorem} \label{thmcutdense}
Every graph on $n$ vertices which is $d$-cut dense can be decomposed into at most $O(n/d)$ cycles and edges.
\end{theorem}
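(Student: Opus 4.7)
The plan is to apply Lemma~\ref{nicestep1} to $G$ with a randomly sampled subgraph $G' \subseteq G$. Since Lemma~\ref{nicestep1} produces a decomposition into $(2 + \ell/2 + b)n$ cycles and edges, the strategy is to choose $G'$ so that both $\ell$ and $b$ are $O(1/d)$, yielding the desired $O(n/d)$ total.

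I would form $G'$ by keeping each edge of $G$ independently with probability $q$, tuned in terms of $d$ and $n$. Standard Chernoff concentration together with union-bound arguments (analogous to the proof of Lemma~\ref{gnqlem}) then guarantee, with positive probability, that $G'$ has maximum degree $\Delta = O(qn)$, that $G'$ is $(\epsilon,\gamma)$-sparse for parameters yielding $b = O(1/d)$ via Corollary~\ref{bcor}, and that $G'$ inherits cut-density $\Omega(qd)$ from $G$ up to a constant factor.

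The key step is verifying the path-connectivity hypothesis of Lemma~\ref{nicestep1}. Since $G$ is $d$-cut dense, $|N(u)|, |N(v)| \geq d(n-1)$ for every pair $u,v$, so these neighborhoods are linear in $n$. Growing a BFS ball in $G'$ from such a set: if $S$ is the current ball, the inherited cut condition gives $e_{G'}(S, V\setminus S) \geq \Omega(qd)|S||V\setminus S|$, and dividing by $\Delta = O(qn)$ yields $|N_{G'}(S)\setminus S| \geq \Omega(d/n)|S||V\setminus S|$. Consequently $|V\setminus B_k|$ shrinks by a factor $1 - \Omega(d)$ once $|B_k| \geq n/2$; iterating shows that within $\ell = O(1/d)$ steps each BFS ball from $N(u)$ and from $N(v)$ covers at least $9/10$ of $V$, forcing them to intersect and producing a path of length $O(1/d)$ between $N(u)$ and $N(v)$ in $G'$. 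The required $12\sqrt{\ell n}\,\Delta + 2\ell n$ edge-disjoint such paths then follow by iterating the construction pair by pair, verifying at each step that the edges removed by previously constructed paths are negligible compared to the cut-budget of $G'$, so the BFS-expansion continues to work for subsequent routings.

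Applying Lemma~\ref{nicestep1} with $\ell, b = O(1/d)$ then gives the desired decomposition of $G$ into $O(n/d)$ cycles and edges. The main obstacle is the edge-disjoint routing: it is not enough to find a single short path between each pair, but $\Omega(\sqrt{\ell n}\,\Delta)$ of them simultaneously, without exhausting the cut-budget of $G'$. This forces a delicate choice of $q$: large enough that $G'$ retains cut-density to support the BFS-expansion even after many paths are removed, yet small enough that $G'$ remains $(\epsilon,\gamma)$-sparse so that the cycle-decomposition parameter $b$ stays at $O(1/d)$.
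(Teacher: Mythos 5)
Your overall architecture is the same as the paper's (sample $G'=G_q$, verify sparseness so that $b=O(1)$ via Corollary~\ref{bcor}, bound the maximum degree, show the cut density is inherited, produce many short edge-disjoint paths between neighborhoods, and finish with Lemma~\ref{nicestep1}), but there is a genuine gap in the one step that carries all the quantitative weight: the claim that the BFS balls cover most of $V$ within $\ell=O(1/d)$ steps. Your expansion estimate divides the inherited cut bound $e_{G'}(S,V\setminus S)\geq \Omega(qd)|S||V\setminus S|$ by the maximum degree $\Delta=O(qn)$, which only gives $|N_{G'}(S)\setminus S|\geq \Omega(d/n)|S||V\setminus S|$. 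Starting from a ball of size $\Theta(dn)$, while $|B_k|\leq n/2$ this is merely multiplicative growth $|B_{k+1}|\geq (1+\Omega(d))|B_k|$ (equivalently, additive growth $\Omega(d^2n)$), so reaching size $n/2$ already takes $\Theta(d^{-1}\log(1/d))$ steps, not $O(1/d)$; the complement only shrinks by the factor $1-\Omega(d)$ you quote after the ball is of linear size. As computed, your argument therefore yields $\ell=\Theta(d^{-1}\log(1/d))$ and a decomposition into $O(n\log(1/d)/d)$ cycles and edges, which falls short of the stated $O(n/d)$. The paper closes exactly this hole with a property of $G_q$ that your proposal never uses: $(2q,\rho)$-thinness (Definition~\ref{defthin}), an upper bound of $O(q)$ on the density between any two sets of size at least $\rho n$. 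In Lemma~\ref{shortpath}, combining this upper bound with the cut-density lower bound forces every BFS layer to contain at least $\Omega(dn)$ new vertices regardless of the current ball size (if the layer were smaller, padding it to size $\rho n$ would violate thinness), so the growth is additive and linear in $n$ and $\ell=2\lceil 2/d\rceil$ genuinely suffices. Dividing by the maximum degree cannot give this, since the cut edges could concentrate on few high-degree boundary vertices; some substitute for thinness is the missing idea.

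Two further points. First, the paper begins by reducing to $d\geq 1/\log\log n$ via Theorem~\ref{firstmain}; this is not cosmetic, because the number of edge-disjoint short paths one can guarantee between two linear-sized sets is only $\Theta(d^3q^2n^2)$, and the required inequality against $12\sqrt{\ell n}\,\Delta+2\ell n$ fails when $d$ is too small. Your remark about a ``delicate choice of $q$'' gestures at this tension but does not resolve the small-$d$ regime. Second, the neighborhoods in the hypothesis of Lemma~\ref{nicestep1} must be taken in $G'$ (the two closing edges of each connecting path have to lie in $G'$ to keep everything edge-disjoint from the decomposition of $E\setminus E'$), so the BFS should start from $N_{G'}(u)$, whose size $\Omega(qdn)$ follows from $G'$ being $\Omega(qd)$-cut dense, rather than from $N_G(u)$ of size $d(n-1)$; this is easy to repair, but as written your routing starts from the wrong sets. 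Your plan for obtaining many edge-disjoint paths by iterating and discounting used edges is fine in spirit -- it is essentially the maximal-collection argument inside Lemma~\ref{shortpath} -- once the per-layer growth is fixed as above.
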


To prove Theorem \ref{thmcutdense}, it suffices to show that $G$ contains a subgraph $G'$ with the properties of Lemma \ref{nicestep1} with $b=O(1)$ and $\ell=O(1/d)$. To this end, we will show that
a random subgraph of an appropriate density almost surely has the desired properties. Let $G_q$ denote the random subgraph of $G$ in which every edge of $G$ is taken in $G_q$ with probability $q$ independently of the other edges. The choice of $q=n^{-\epsilon}$ with $\epsilon=1/4$ makes it so that we may take $G'=G_q$ almost surely. To show this, we need to first establish some auxiliary lemmas.

A version of Menger's theorem \cite{M27} states that between any two disjoint vertex sets $S$ and $T$ of a graph, the maximum number of edge-disjoint paths from $S$ to $T$ is equal
to the minimum edge-cut separating $S$ and $T$. We would further like to guarantee that the many edge-disjoint paths from $S$ to $T$ are short. To do this will require some more information about the graph. In the last section, we introduced a notion of sparseness saying that no subset contains too many edges. This gives a nice expansion property which we showed implies that any subgraph has (in terms of the number of edges and vertices) a relatively long cycle. It will be convenient to introduce a somewhat different notion of sparseness which will be helpful in guaranteeing that there are many short edge-disjoint paths between any two large vertex subsets. Given two disjoint vertex sets $S$ and $T$ in a graph $G$, we will use the notation $d(S, T) = e(S, T)/|S||T|$ for the density of edges between $S$ and $T$.

\begin{definition}\label{defthin}
A graph $G$ on $n$ vertices is $(q,\rho)$-thin if any disjoint subsets $S,T$ with $|S|,|T| \geq \rho n$ satisfy $d(S,T) \leq q$.
\end{definition}

To show that a graph on $n$ vertices is $(q,\rho)$-thin, it suffices to show that $d(S,T) \leq q$ for any disjoint subsets $S,T$ of cardinality exactly $\lceil \rho n \rceil$. Indeed, if $S$ and $T$ are disjoint sets of cardinality at least $\rho n$ with $d(S,T) > q$, then there are $S' \subset S$ and $T' \subset T$ of cardinality $\lceil \rho n \rceil$ with $d(S',T') \geq d(S,T) > q$. This follows by averaging over all choices of $S'$ and $T'$.

The key lemma we will need about $(q, \rho)$-thin graphs is that if such a graph is also $dq$-cut dense for some $0 < d < 1$ then every pair of large sets has many short edge-disjoint paths between them.

\begin{lemma}\label{shortpath}
Let $0<d,\rho<1$ with $d \geq 4\rho$ and let $\ell=2 \lceil 2/d \rceil$. Let $G=(V,E)$ be a graph which is $dq$-cut dense and $(q,\rho)$-thin. For all disjoint $S,T \subset V$ with $|S|,|T| \geq \rho n$, there are at least $x=\frac{\rho dqn^2}{4\ell}$ edge-disjoint paths of length at most $\ell$  between $S$ and $T$.
\end{lemma}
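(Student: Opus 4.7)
The plan is to build the desired collection of $x = \rho dqn^2/(4\ell)$ edge-disjoint paths greedily. Suppose $k - 1$ such paths of length at most $\ell$ have been produced, for some $k \le x$, and let $F$ denote the set of all edges used, so that $|F| \le (k-1)\ell < x\ell = \rho dqn^2/4$. Writing $G' = G - F$, the task reduces to finding a further path from $S$ to $T$ of length at most $\ell$ in $G'$. I will do this via a meet-in-the-middle BFS argument: letting $r = \ell/2 = \lceil 2/d\rceil$, I run BFS in $G'$ from both $S$ and $T$ and let $B_i$, $C_i$ denote the resulting balls of radius $i$ around $S$, $T$. It will suffice to show $|B_r|, |C_r| > n/2$, since then any common vertex of $B_r$ and $C_r$ produces a walk, and hence a path, of length at most $2r = \ell$ in $G'$ from $S$ to $T$.

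The core of the proof is the one-step BFS analysis showing $|B_r| > n/2$ (the argument for $|C_r|$ is symmetric). Set $A_i = B_{i+1} \setminus B_i$, and note $|B_i| \ge |S| \ge \rho n$ throughout. Since every $G'$-edge from $B_i$ to $B_i^c$ has its other endpoint in $A_i$, the cut-density hypothesis yields
\[
e_{G'}(B_i, A_i) \;=\; e_{G'}(B_i, B_i^c) \;\ge\; e_G(B_i, B_i^c) - |F| \;\ge\; dq|B_i||B_i^c| - |F|.
\]
I distinguish three cases. In Case 1, $|B_i^c| < \rho n$, giving $|B_i| > (1-\rho)n > n/2$ (the hypothesis $d \ge 4\rho$ and $d<1$ force $\rho < 1/4$). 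In Case 2, $|B_i^c| \ge \rho n$ but $|A_i| < \rho n$: I enlarge $A_i$ to a set $A' \subseteq B_i^c$ of exact size $\rho n$. Vertices of $A' \setminus A_i$ have no $G'$-neighbour in $B_i$ (else they would already lie in $A_i$), so $e_{G'}(B_i, A') = e_{G'}(B_i, A_i)$. The $(q,\rho)$-thinness of $G$ then gives $e_{G'}(B_i, A_i) \le e_G(B_i, A') \le q|B_i|\rho n$. Combined with the cut-density lower bound, one obtains $|B_i^c| \le \rho n/d + |F|/(dq|B_i|)$; the hypothesis $d \ge 4\rho$ bounds the first term by $n/4$, and $|F| < \rho dqn^2/4$ together with $|B_i| \ge \rho n$ bounds the second by $n/4$, so $|B_i| > n/2$ already. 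In Case 3, $|B_i^c| \ge \rho n$ and $|A_i| \ge \rho n$: applying thinness directly to $(B_i, A_i)$ yields $|A_i| \ge d|B_i^c| - |F|/(q|B_i|) \ge d|B_i^c| - dn/4$, hence $|B_{i+1}^c| \le (1-d)|B_i^c| + dn/4$.

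If Case 1 or 2 occurs at some step $i \le r$, then $|B_r| \ge |B_i| > n/2$. Otherwise Case 3 holds at every step and the recursion iterates to
\[
|B_r^c| \;\le\; (1-d)^r n + \tfrac{dn}{4}\cdot \sum_{i=0}^{r-1}(1-d)^i \;\le\; (1-d)^r n + \tfrac{n}{4} \;\le\; e^{-2}n + \tfrac{n}{4} \;<\; \tfrac{n}{2},
\]
again giving $|B_r| > n/2$. The same argument yields $|C_r| > n/2$, so $B_r \cap C_r \ne \emptyset$ and the required short $S$-$T$ path in $G'$ exists, closing the induction and producing $x$ edge-disjoint short paths in total.

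The main technical obstacle I foresee is Case 2. If BFS expansion essentially stalls for a single step (so that $|A_i|$ drops below the thinness threshold $\rho n$), then thinness cannot be applied directly to the pair $(B_i, A_i)$ to upper-bound the edges leaving $B_i$. The fix is to pad $A_i$ up to size $\rho n$ inside $B_i^c$; because the padded vertices contribute no $G'$-edges to $B_i$, thinness on the enlarged disjoint pair still controls exactly the quantity of interest. It is precisely the hypothesis $d \ge 4\rho$ that makes the resulting estimate $\rho n/d \le n/4$ small enough, together with the edge budget $|F| < \rho dqn^2/4$, to conclude $|B_i| > n/2$ outright in this degenerate case.
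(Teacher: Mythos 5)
Your proof is correct and follows essentially the same route as the paper's: delete the edges already used (at most $x\ell=\rho dqn^2/4$ of them), grow balls around $S$ and $T$ in the remaining graph, lower-bound the edges leaving each ball via cut-density, upper-bound them via thinness (padding the frontier up to size $\rho n$ when it is too small), and conclude that both balls exceed $n/2$ within $\ell/2$ steps and hence meet. The only cosmetic differences are that the paper phrases the greedy step as a maximal-collection/contradiction argument and tracks additive frontier growth ($|N_{i+1}\setminus N_i|\geq \frac{d}{4}n$, giving $|N_i|\geq \rho n+\frac{di}{4}n$), whereas you track geometric decay of the complement and dispose of the small-frontier case by concluding $|B_i|>n/2$ outright.
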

\begin{proof}
Delete a maximal collection of edge-disjoint paths of length at most $\ell$ from $S$ to $T$ and let $G'$ be the resulting subgraph of $G$. By the definition of $G'$, there is no path from $S$ to $T$ in $G'$ of length at most $\ell$.
Suppose for the sake of contradiction that the number of these edge-disjoint paths is at most $x$, so that at most $x\ell$ edges are deleted from $G$ to obtain $G'$. It suffices to show that more than $n/2$ vertices of $G'$ are distance at most $\ell/2$ from $S$, as by symmetry, we would also get more than $n/2$ vertices of $G'$ are distance at most $\ell/2$ from $T$.
This would imply that there is a vertex of distance at most $\ell/2$ to both $S$ and $T$, and hence a path from $S$ to $T$ in $G'$ of length at most $\ell$, a contradiction. So suppose there are at most
$n/2$ vertices of $G'$ at distance at most $\ell/2$ from $S$.

For $1 \leq i \leq \ell/2$, let $N_i$ denote the set of vertices which are at distance at most $i$ from $S$ in $G'$, so $N_0=S$ and $\rho n \leq |N_0| \leq |N_1| \leq \dots \leq |N_{\ell/2}| \leq n/2$.
Since $G$ is $dq$-cut dense, the number of edges in $G$ from $N_i$ to $N_{i+1} \setminus N_i$ satisfies
$$e_G(N_i,N_{i+1} \setminus N_i)=e_G(N_i,V \setminus N_i) \geq dq|N_i||V \setminus N_i| \geq dq|N_i|\frac{n}{2}.$$
Since $G'$ is obtained from $G$ by deleting at most $x\ell$ edges, $$e_{G'}(N_i,N_{i+1} \setminus N_i) \geq dq|N_i|\frac{n}{2}-x\ell \geq \frac{dqn}{4}|N_i|,$$
where we used that $|N_{i}| \geq |N_0| \geq \rho n$. To estimate the size of $N_{i+1} \setminus N_i$, we let $U=N_{i+1} \setminus N_i$ if $|N_{i+1} \setminus N_i| \geq \rho n$ and otherwise let $U$ be any subset of $V \setminus N_i$ containing $N_{i+1} \setminus N_i$ of order $\rho n$. Then $$q|N_i||U|\geq e_G(N_i,U) \geq e_{G'}(N_i,U) \geq e_{G'}(N_i,N_{i+1}\setminus N_i) \geq \frac{dqn}{4}|N_i|,$$  implying $|U| \geq \frac{d}{4}n$. As $d \geq 4\rho$, this implies that $U = N_{i+1} \setminus N_i$ and $|N_{i+1}\setminus N_i| \geq \frac{d}{4}n$.  By induction on $i$, we get
$|N_i| \geq \rho n +\frac{di}{4}n$.
For $i= \lceil \frac{2}{d} \rceil=\frac{\ell}{2}$, we get $|N_i| > n/2$, a contradiction.
\end{proof}

We will now show that a random subgraph of $G$ has the properties that we need to deduce Theorem~\ref{thmcutdense}.

\begin{lemma}\label{gqlem}
Suppose $0< \epsilon<1/2$, $\gamma = 2/\epsilon$ and $d,\rho \geq \frac{64\ln n}{qn}$.  Let $G$ be a graph on $n$ vertices and $q=n^{-\epsilon}$. Then $G_q$ almost surely has the following properties:
\begin{itemize}
\item $G_q$ is  $(\epsilon,\gamma)$-sparse.
\item $G_q$ is $(2q,\rho)$-thin.
\item $G_q$ has maximum degree at most $2qn$.
\item If $G$ is also $d$-cut dense then almost surely $G_q$ is $qd/2$-cut dense.
\end{itemize}
\end{lemma}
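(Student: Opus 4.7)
The plan is to establish each of the four bullets by applying Chernoff's inequality to the relevant edge count in $G_q$ and then taking a union bound over the appropriate class of sets or partitions. By independent sampling, for any fixed $H \subseteq E(G)$ the random variable $|H \cap E(G_q)|$ is distributed as $\mathrm{Bin}(|H|, q)$, so all four quantities of interest are binomial (and in the sparsity, thinness, and degree bullets, stochastically dominated by $\mathrm{Bin}(|H|, q)$ with $|H|$ replaced by a convenient upper bound). The hypothesis $d, \rho \geq 64 \ln n / (qn)$ is exactly what is needed to beat the various union bounds, and the proof of Lemma~\ref{gnqlem} provides the template for the sparsity bullet.

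I would dispatch the two easier bullets first. For $(\epsilon, \gamma)$-sparsity, the calculation in Lemma~\ref{gnqlem} carries over verbatim: the number of edges of $G_q$ in any $v$-vertex induced subgraph is dominated by $\mathrm{Bin}(\binom{v}{2}, q)$, so the probability of having at least $\gamma v^{2-\epsilon}$ such edges is at most $(en/v)^{-2 v^{2-\epsilon}}$, which sums against the $\binom{n}{v} \leq (en/v)^{v}$ choices of subset (and over $v$) to $o(1)$. For the maximum degree bullet, $\deg_{G_q}(v)$ has mean at most $qn$, and since $d \leq 1$ the hypothesis forces $qn \geq 64 \ln n$; a Chernoff upper tail gives $\Pr[\deg_{G_q}(v) > 2qn] \leq e^{-qn/3}$, so a union bound over the $n$ vertices is negligible. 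For $(2q, \rho)$-thinness I would fix disjoint $S, T$ of cardinality exactly $\lceil \rho n \rceil$ (as justified in the remark following Definition~\ref{defthin}), bound $e_{G_q}(S, T)$ by $\mathrm{Bin}(|S||T|, q)$ of mean at most $q \rho^2 n^2$, and apply Chernoff to obtain failure probability $\exp(-q \rho^2 n^2 / 3)$; since $\binom{n}{\lceil \rho n \rceil}^2 \leq \exp(2 \rho n \ln(e/\rho))$ and $q \rho n \geq 64 \ln n$ easily dominates $6 \ln(e/\rho)$, the union bound closes.

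The main step, and the one that most sharply uses the hypothesis, is the $qd/2$-cut density bullet, because we must union bound over all $2^n$ cuts. For a fixed partition $V = V_1 \cup V_2$ with $|V_1| = k \leq n/2$, the assumption that $G$ is $d$-cut dense gives $e_G(V_1, V_2) \geq dk(n-k) \geq dkn/2$, so $e_{G_q}(V_1, V_2)$ has mean at least $qdk(n-k)$ and a Chernoff lower tail yields failure probability at most $\exp(-q d k (n-k)/8) \leq \exp(-q d k n/16)$. Multiplying by $\binom{n}{k} \leq (en/k)^k$ and summing, the total failure probability is bounded by $\sum_{k=1}^{n/2} \exp\bigl(k \ln(en/k) - q d k n/16\bigr)$, which is $o(1)$ because $qdn \geq 64 \ln n$ comfortably dominates $16 \ln(en)$ for every $1 \leq k \leq n/2$. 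I do not foresee substantive obstacles beyond tracking the constants in the various Chernoff bounds to confirm that the factor $64$ in the hypothesis is sufficient for all four union bounds simultaneously.
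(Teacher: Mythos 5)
Your proposal is correct and follows essentially the same route as the paper: each bullet is handled by a Chernoff bound on the relevant binomial edge count followed by a union bound, with the sparsity bullet repeating the calculation of Lemma~\ref{gnqlem} and the hypothesis $d,\rho \geq 64\ln n/(qn)$ used exactly where the paper uses it (thinness and cut-density). The only differences are cosmetic choices in bounding the number of set pairs (e.g.\ $\binom{n}{t}^2 \leq e^{2\rho n\ln(e/\rho)}$ versus the paper's $n^{2t}/t!^2$), which do not change the argument.
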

\begin{proof}
As the goal is to show that $G_q$ almost surely has the desired properties, we may assume that $n$ is sufficiently large. We first show that $G_q$ is almost surely $(\epsilon,\gamma)$-sparse. Consider an induced subgraph of $G_q$ on $v$ vertices, so it has at most $a:={v \choose 2}$ edges in $G$. The probability that this induced subgraph contains at least $t:=\gamma v^{2-\epsilon}$ edges is at most $${a \choose t}q^{t} \leq \left(\frac{qea}{t}\right)^{t} \leq \left((2/e)\gamma (n/v)^{\epsilon}\right)^{-t} \leq \left(2 (n/v)^{\epsilon}\right)^{-t} \leq (en/v)^{-2v^{2-\epsilon}}.$$
As there are ${n \choose v} \leq \left(\frac{en}{v}\right)^v$ such subsets of order $v$, the probability that there is a subset of order $v$ with at least $t$ edges is at most $$ \left(en/v\right)^v  \left(en/v\right)^{-2 v^{2-\epsilon}} \leq  \left(en/v\right)^{-v^{2-\epsilon}} .$$
Summing over all $1 \leq v \leq n$, we have that almost surely $G_q$ is $(\epsilon,\gamma)$-sparse.

We next show that $G_q$ is almost surely $(2q,\rho)$-thin. Suppose $S$ and $T$ are disjoint vertex subsets of order $t= \lceil \rho n \rceil $. By the Chernoff bound, the probability that the density between $S$ and $T$ is at least $2q$ is at most $e^{-qt^2/3}$. The number of choices for $S$ and $T$ is ${n \choose t}{n-t \choose t} \leq \frac{n^{2t}}{t!^2}$. Hence, the probability that $G_q$ is not $(2q,\rho)$-thin is at most 
$$e^{-qt^2 /3}\frac{n^{2t}}{t!^2} \leq \frac{1}{t!^2}=o(1),$$
where we used that  $t= \lceil \rho n \rceil$ and $\rho \geq \frac{64\ln n}{qn}$. This
shows that $G_q$ is almost surely $(2q,\rho)$-thin.

We next show that $G_q$ almost surely has maximum degree at most $2qn$. Since $G$ has maximum degree at most $n-1$, the expected degree of each vertex in $G_q$ is at most $q(n-1)$. By the Chernoff bound, the probability that a given vertex has degree at least $2qn$ is at most $e^{-q(n-1)/3}<1/n^2$. Hence, summing over all $n$ vertices, almost surely all vertices have degree less than $2qn$.

Finally, we show that if $G$ is $d$-cut dense, then almost surely $G_q$ is $qd/2$-cut dense.  Let $S$ be a vertex subset of order $v \leq n/2$. The edge density in $G$ between $S$ and $V \setminus S$ is at least $d$, and hence the expected number of edges between $S$ and $V \setminus S$ in $G_q$ is at least $dq|S||V \setminus S| = dqv(n-v)$. By the Chernoff bound, the probability that the density between $S$ and $V \setminus S$ is at most $dq/2$ is at most $e^{-dqv(n-v)/8} \leq e^{-dqvn/16}$. Summing over all choices of $v$ and all ${n \choose v}$ choices of subsets of order $v$,
using that $d \geq \frac{64\ln n}{qn}$, we have that the probability that $G_q$ is not $qd/2$-cut dense is at most
$$\sum_{v=1}^{n/2} {n \choose v}e^{-dqvn/16} \leq \sum_{v=1}^{n/2} n^v e^{-dqvn/16}= \sum_{v=1}^{n/2} e^{v(\ln n - dqn/16)} \leq \sum_{v=1}^{n/2}n^{-v}=o(1).$$
Therefore, $G_q$ is almost surely $qd/2$-cut dense.
\end{proof}

The proof of Theorem~\ref{thmcutdense} is now a straightforward combination of our results.

\vspace{2mm}

\noindent {\bf Proof of Theorem \ref{thmcutdense}:} We may assume that $d \geq 1/\log \log n$, as otherwise the theorem follows from Theorem \ref{firstmain}. We will also assume that $n$ is taken sufficiently large. As already discussed, it suffices to show that  $G_q$ with $q=n^{-\epsilon}$ and $\epsilon=1/4$ almost surely has the properties needed for $G'$ in Lemma \ref{nicestep1} with $\ell=O(1/d)$ and $b=O(1)$. Taking $\gamma=8$ and $\rho=dq/8$, we have by Lemma \ref{gqlem} that $G_q$ almost surely is $(\epsilon,\gamma)$-sparse, $(2q,\rho)$-thin, $qd/2$-cut dense and has maximum degree $\Delta$ at most $2qn$. Fix a subgraph $G'=G_q$ with these properties. By Corollary \ref{bcor}, it follows that $G'$ has the property that every subgraph can be decomposed into at most $bn$ cycles and edges, where $b$ is an absolute constant.  By Lemma \ref{shortpath} applied to $G'$ with $d$ replaced by $d/4$, $q$ replaced by $2q$ and $\ell=2 \lceil 8/d \rceil \leq 32/d$, we have that for all disjoint $S,T \subset V$ with $|S|,|T| \geq \rho n$, there are at least $\frac{\rho (d/4)(2q)n^2}{4\ell} \geq 2^{-11} d^3q^2n^2$ edge-disjoint paths of length at most $\ell$  between $S$ and $T$. Since $G'$ is $qd/2$-cut dense, for any pair of vertices $u,v$, we have $|N(u)|,|N(v)| \geq qd(n-1)/2 \geq \frac{qd}{4}n=2\rho n$. We can find disjoint $S \subset N(u)$ and $T \subset N(v)$ with $|S|,|T| \geq \rho n$. Hence, there are at least $2^{-11} d^3q^2n^2>12\sqrt{\ell n}2qn +2\ell n\geq 12\sqrt{\ell n}\Delta+2\ell n$ edge-disjoint paths of length at most $\ell$  between $N(u)$ and $N(v)$. Since $G'$ has the desired properties in Lemma \ref{nicestep1} with $\ell=O(1/d)$ and $b=O(1)$, Theorem \ref{thmcutdense} follows. \qed

\section{Graphs of linear minimum degree}
\label{sectmin}
In this section, we prove Theorem \ref{firstmain}, verifying the Erd\H{o}s-Gallai conjecture for graphs of linear minimum degree.

We begin with the following lemma. It shows that if a graph has large minimum degree and a sparse cut then there is another sparse cut such that the two subgraphs induced by the vertex subsets still have large minimum degree and average degree.

\begin{lemma}\label{keylong}
Let $0<c<1$ and $d \leq c/2$. Let $G=(V,E)$ be a graph on $n$ vertices with minimum degree at least $cn$ and for which there is a vertex partition $V=V_1 \cup V_2$ with $d(V_1,V_2) \leq d$. Then there is another vertex partition $V=U_1 \cup U_2$ such that, for $i=1,2$, the induced subgraph $G[U_i]$ has minimum degree at least $(c-5dc^{-1})|U_i|$ and $e(U_1,U_2) \leq dn^2$. Furthermore, for $i = 1,2$, the average degree of the vertices in $U_i$ is at most $4c^{-1}dn$ less in $G[U_i]$ than in $G$.
\end{lemma}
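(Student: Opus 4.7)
I would construct $(U_1, U_2)$ by an iterative swapping procedure applied to $(V_1, V_2)$. Call a vertex $v \in U_i$ \emph{bad} if $\deg_{G[U_i]}(v) < (c - 5dc^{-1})|U_i|$; while any bad vertex exists, move it to the opposite side. The minimum-degree condition will then hold by definition at termination, so it remains to verify that the procedure terminates and that the cut and average-degree bounds hold.

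For termination, I would show each move strictly decreases $e(U_1, U_2)$. Any bad $v \in U_i$ satisfies $\deg(v, U_{3-i}) = \deg_G(v) - \deg(v, U_i) > cn - (c - 5dc^{-1})|U_i|$, and elementary algebra using $d \leq c/2$ (together with the observation that the bad condition is vacuous unless $d < c^2/5$) shows this exceeds $\deg(v, U_i) < (c - 5dc^{-1})|U_i|$. Since $e(U_1, U_2)$ is a nonnegative integer that strictly decreases at each step, the algorithm halts. The cut bound $e(U_1, U_2) \leq dn^2$ is then immediate from the monotone decrease together with $e(V_1, V_2) \leq d|V_1||V_2| \leq dn^2/4$.

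The key additional ingredient is the size-sensitive invariant
\[
e(U_1, U_2) \leq 4c^{-1}dn \cdot \min(|U_1|, |U_2|),
\]
maintained throughout the algorithm. Once established, dividing by $|U_i|$ yields the average-degree bound, because
\[
\frac{1}{|U_i|}\sum_{v \in U_i}\bigl(\deg_G(v) - \deg_{G[U_i]}(v)\bigr) = \frac{e(U_1, U_2)}{|U_i|} \leq 4c^{-1}dn.
\]
The invariant holds initially, since $e(V_1, V_2) \leq d\min(|V_1|, |V_2|)n \leq 4c^{-1}d\min(|V_1|, |V_2|)n$ (using $c \leq 1$). It persists under each move: if $v$ is moved from the larger side, $\min(|U_1|, |U_2|)$ does not decrease while the cut does, so the bound is preserved trivially; if $v$ is moved from the smaller side, $\min$ decreases by one, but the bad-vertex inequality together with $|U_i| \leq n/2$ forces the cut drop $\delta = \deg(v, U_{3-i}) - \deg(v, U_i)$ to satisfy $\delta > cn - 2(c - 5dc^{-1})(n/2) = (5d/c)n > 4c^{-1}dn$, which compensates exactly for the loss on the right-hand side.

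\textbf{Main obstacle.} The delicate part is the invariant-preservation analysis when a bad vertex lies on the smaller side: one must show $\delta \geq 4c^{-1}dn$, which hinges on combining the bad-vertex condition with $|U_i| \leq n/2$; the hypothesis $d \leq c/2$ provides exactly the margin needed via $\delta > (5d/c)n > 4c^{-1}dn$. A secondary subtlety is guaranteeing that every move strictly decreases the cut — for a bad vertex on a side with $|U_i|$ above the threshold $cn/(2(c - 5dc^{-1}))$, one might need to restrict to moves that do strictly decrease the cut, and then argue separately (using $\deg_G(v) \geq cn$ and the smallness of the opposite side) that no bad vertex survives at the local minimum.
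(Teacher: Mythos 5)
Your proposal has a genuine gap at its central step: the claim that moving \emph{any} bad vertex strictly decreases the cut is false. If $v\in U_i$ is bad, you only know $\deg(v,U_i)<(c-5dc^{-1})|U_i|$ and $\deg(v,U_{3-i})>cn-(c-5dc^{-1})|U_i|$; the conclusion $\deg(v,U_{3-i})>\deg(v,U_i)$ requires $|U_i|\leq cn/\bigl(2(c-5dc^{-1})\bigr)$, and when the bad vertex sits on a larger side this can fail badly. Concretely, take $c=0.9$, $d$ tiny, $|U_i|=0.9n$ and a vertex with $\deg_G(v)=0.9n$, $\deg(v,U_i)=0.7n$, $\deg(v,U_{3-i})=0.2n$: it is bad (since $0.7n<0.81n$), yet moving it \emph{increases} the cut by $0.5n$. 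So neither termination nor the invariant preservation for moves from the larger side (which you justify by "the cut decreases") is established. You flag this as a "secondary subtlety," but your proposed fix does not close it either: if you restrict to cut-decreasing moves, then at a local minimum of the cut you only get $\deg(v,U_i)\geq\deg_G(v)/2\geq cn/2$ for every vertex, and on a large side this can still be below $(c-5dc^{-1})|U_i|$ (e.g.\ $c=0.8$, $|U_i|=0.75n$, $d$ small: threshold $\approx 0.6n$ versus guarantee $0.4n$), so bad vertices can survive at the local minimum. Thus the minimum-degree conclusion is not obtained.

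By contrast, the paper avoids any iteration: after normalizing $|V_1|\leq|V_2|$ and noting $|V_1|\geq cn/2$, it defines $X_i$ as the vertices of $V_i$ with fewer than $c|V_i|$ neighbors inside $V_i$, observes each such vertex has at least $c|V_{3-i}|$ neighbors across the cut, and uses the hypothesis $d(V_1,V_2)\leq d$ to force $|X_i|\leq dc^{-1}|V_i|$; the single swap $U_i=(V_i\setminus X_i)\cup X_{3-i}$ then yields all three bounds directly (with the average-degree loss read off as $e(U_1,U_2)/|U_i|\leq dn^2/|U_i|\leq 4c^{-1}dn$ using $|U_i|\geq cn/4$). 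The essential ingredient your argument is missing is this use of the \emph{given sparse cut} to certify that only a small set of vertices needs to move; a cut-monotone local search does not substitute for it. Your invariant $e(U_1,U_2)\leq 4c^{-1}dn\min(|U_1|,|U_2|)$ and the arithmetic for moves from the smaller side are fine, but as it stands the proof does not go through.
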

\begin{proof}
Suppose without loss of generality that $|V_1| \leq |V_2|$, so $|V_2| \geq n/2$. We have $|V_1| \geq cn/2$. Otherwise, as each vertex has degree at least $cn$, each vertex in $V_1$ has more than $cn/2$ neighbors in $V_2$, and hence $d(V_1,V_2)>c/2$, contradicting $d(V_1,V_2) \leq d$.

For $i \in \{1,2\}$, let $X_i$ be the set of vertices in $V_i$ which have fewer than $c|V_i|$ neighbors in $V_{i}$. As each vertex has at least $cn$ neighbors, each vertex in $X_i$ has at least $cn-c|V_i|=c|V_{3-i}|$ neighbors in $V_{3-i}$. Let $U_i=(V_i\setminus X_i) \cup X_{3-i}$, so $V=U_1 \cup U_2$ is a bipartition of $V$. As
$$d|V_1||V_2| \geq e(V_1,V_2) \geq e(X_i,V_{3-i}) \geq |X_i| \cdot c |V_{3-i}|,$$ we have
$|X_i| \leq dc^{-1}|V_i|$. Hence, \begin{eqnarray*}e(U_1,U_2) & \leq &e(V_1,V_2)+e(X_1,V_1)+e(X_2,V_2) \leq d|V_1||V_2|+|X_1|\cdot c |V_1|+|X_2| \cdot c |V_2| \\ & \leq & d|V_1||V_2|+dc^{-1}|V_1| \cdot c |V_1|+dc^{-1}|V_2| \cdot c |V_2| = d(|V_1||V_2|+|V_1|^2+|V_2|^2) \leq dn^2.
\end{eqnarray*}

We also have $$|U_i| \geq \left(1-dc^{-1}\right)|V_i| \geq \frac{|V_i|}{2} \geq cn/4.$$

As $e(U_1,U_2) \leq dn^2$, the vertices in $U_i$ on average have degree at most $dn^2/|U_i| \leq 4c^{-1}dn$ less in $G[U_i]$ than in $G$.

Note that each vertex in $U_i$, whether in $X_{3-i}$ or $V_i \setminus X_i$, has degree at least $c|V_{i}|$ in $V_{i}$. 
Since $|U_i| \geq cn/4$ and $|V_{3-i}|\leq n$ we also have $|V_{3-i}|\leq 4c^{-1}|U_i|$.
Thus, the minimum degree in $G[U_i]$ is at least
\begin{eqnarray*} c|V_i|-|X_i| & \geq & c|V_i|-dc^{-1}|V_i| = (c-dc^{-1})|V_i| \geq  (c-dc^{-1})\left(|U_i|-|X_{3-i}|\right) \\ & \geq &   (c-dc^{-1})\left(|U_i|-dc^{-1}|V_{3-i}|\right) \geq
 (c-dc^{-1})\left(|U_i|-4dc^{-2}|U_{i}|\right) \\ & = & (c-dc^{-1})(1-4dc^{-2})|U_i| \geq  (c-5dc^{-1})|U_i|,\end{eqnarray*}
 completing the proof.
\end{proof}

We use the preceding lemma to obtain a vertex partition of a graph of linear minimum degree into a bounded number of highly connected vertex subsets.

\begin{lemma}\label{lempartcutdense}
Let $G=(V,E)$ be a graph on $n$ vertices with minimum degree at least $cn$ and $d \leq c^3/80$. Then there
is a vertex partition of $G$ into parts $V_1,\ldots,V_r$ each of order more than $cn/2$ (so $r \leq 2c^{-1} $) such that for each $i$ the induced subgraph $G[V_i]$ has minimum degree at least $\frac{c}{2}|V_i|$ and is $d$-cut dense.
\end{lemma}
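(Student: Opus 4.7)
I would prove the lemma by iteratively applying Lemma~\ref{keylong}. Starting with the trivial partition $\mathcal{P}_0 = \{V\}$, as long as some part $U \in \mathcal{P}$ has $G[U]$ not $d$-cut dense, I pick a sparse cut of $G[U]$ and invoke Lemma~\ref{keylong} to replace $U$ by a new pair $U_1 \cup U_2$, halting when every part is $d$-cut dense. The plan is to maintain, by strong induction on the number of splits, two invariants: \emph{(I)} every current part $U$ has $G[U]$-minimum degree at least $(c/2)|U|$, and \emph{(II)} every current $|U| > cn/2$, so $|\mathcal{P}| \le 2/c$ and the split tree has depth at most $2/c - 1$.

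For \emph{(I)}, I track the local min-degree ratio $c_U$ of each part. Lemma~\ref{keylong} guarantees $c_{U_i} \ge c_U - 5d/c_U$, a decrement of at most $10d/c$ provided $c_U \ge c/2$. Chaining along any root-to-leaf path of depth at most $2/c - 1$ (from \emph{(II)}) gives total loss at most $20d/c^2 \le c/4$ since $d \le c^3/80$, so $c_U \ge 3c/4 \ge c/2$ is preserved. The hypothesis $d \le c_U/2$ needed to invoke Lemma~\ref{keylong} at each step follows from $c_U \ge 3c/4$ and $d \le c^3/80 \le c/4$ for $c \le 1$.

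For \emph{(II)}, I bound the total edges leaving a final part $V_i$ by summing Lemma~\ref{keylong}'s cut bound $e(U_1, U_2) \le d|U|^2$ over the ancestors of $V_i$ in the split tree. Invariant \emph{(I)} implies the geometric shrinkage $|U^{(k+1)}| \le (1 - c/8)|U^{(k)}|$, hence $\sum_k |U^{(k)}|^2 \le 5n^2/c$ and $e(V_i, V \setminus V_i) \le 5dn^2/c \le c^2n^2/16$. Since each vertex has $\deg_G(v) \ge cn$, the average degree in $G[V_i]$ is at least $cn - c^2n^2/(16|V_i|)$, and combined with $\bar{d} \le |V_i| - 1$ this forces the quadratic $|V_i|^2 - cn\,|V_i| + c^2n^2/16 \ge 0$. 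Its positive branches give a dichotomy: either $|V_i| > \tfrac{2+\sqrt{3}}{4}\,cn > cn/2$ (the large regime) or $|V_i| < \tfrac{2-\sqrt{3}}{4}\,cn$ (the small regime). The small regime is excluded by combining Lemma~\ref{keylong}'s immediate guarantee $|V_i| \ge c_U|U|/4$ at the last split with the inductive hypothesis $|U| > cn/2$ at the parent, pinning $|V_i|$ in the large regime, so $|V_i| > cn/2$.

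The main obstacle is the bootstrap nature of invariant \emph{(II)}: the averaging bound has $|V_i|$ on both sides, and the resulting quadratic leaves a small-regime alternative that must be ruled out by a delicate quantitative comparison between Lemma~\ref{keylong}'s immediate size output and the small-regime threshold. The strong induction on the number of splits keeps \emph{(I)} and \emph{(II)} in force simultaneously throughout, so that the geometric-decay estimate is valid at every step and the dichotomy resolves the right way. Once the procedure halts, every part is $d$-cut dense by construction and \emph{(I)} and \emph{(II)} yield the required minimum-degree and size conclusions.
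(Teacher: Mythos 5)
Your overall architecture (iterate Lemma~\ref{keylong} on parts that are not $d$-cut dense, track a local min-degree ratio that decays by $O(d/c)$ per split, and bound the number of parts by $2/c$ because each part should stay larger than $cn/2$) is the same as the paper's, and the pieces you verify directly -- the decrement chaining for invariant \emph{(I)}, the geometric shrinkage $|U^{(k+1)}|\le (1-c/8)|U^{(k)}|$, and the aggregate bound $e(V_i,V\setminus V_i)\le 5dn^2/c\le c^2n^2/16$ -- are fine given the invariants. The genuine gap is exactly where you acknowledge delicacy: the exclusion of the small branch of the quadratic. From $|V_i|^2-cn|V_i|+c^2n^2/16\ge 0$ you get either $|V_i|\ge \frac{2+\sqrt3}{4}cn$ or $|V_i|\le \frac{2-\sqrt3}{4}cn\approx 0.067\,cn$, and you propose to kill the second branch via Lemma~\ref{keylong}'s size guarantee at the last split, $|V_i|\ge c_U|U|/4$ with $|U|>cn/2$. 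But that only gives $|V_i|\ge \frac{3c}{4}\cdot\frac{cn}{2}\cdot\frac14=\frac{3c^2n}{32}$ (or $c^2n/16$ with $c_U\ge c/2$), and $\frac{3c^2}{32}<\frac{2-\sqrt3}{4}c$ whenever $c<0.71$; with $c_U\ge c/2$ the comparison fails for every $c<1$. So in the interesting regime of small $c$ the dichotomy is not resolved, invariant \emph{(II)} is not established, and with it the depth bound that your chaining argument for \emph{(I)} relies on collapses. The root cause is that you bound the total number of crossing edges and then divide by the unknown final part size $|V_i|$; an averaging bound of this type cannot by itself rule out a small part all of whose vertices have lost most of their degree, which is precisely the surviving small branch.

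The paper avoids this trap by doing the size control additively per split rather than in aggregate: it maintains, for each current part $W_j$, that the average degree of the induced subgraph $G[W_j]$ is at least $cn-8(i-1)c^{-1}dn$, using the ``Furthermore'' clause of Lemma~\ref{keylong} (average degree drops by at most $4\delta^{-1}d|W_j|\le 8c^{-1}dn$ at a split, because the child has size at least a $\delta/4$-fraction of its parent). Since the process can run for at most $2c^{-1}$ steps, each part always has average degree at least $cn-16c^{-2}dn\ge cn-cn/5>cn/2$, which immediately forces $|W_j|>cn/2$ with no two-branch alternative to exclude. If you want to keep your framework, you need to replace the aggregate crossing-edge count by such a per-split degree (or average-degree) accounting; as written, the bootstrap between \emph{(I)} and \emph{(II)} does not close.
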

\begin{proof}
We produce the partition in steps. In step $i$, we have a
partition $P_i$ of the vertex set $V$ into sets $W_1,...,W_i$ such that
each $G[W_j]$ has minimum degree at least $c(1-20dc^{-2})^{i-1}|W_j|$ and average degree at least $cn-8(i-1)c^{-1}dn$.

Note that this process must terminate within $r=2c^{-1}$ steps. If the process were to run for more than $r$ steps then, for each part in the partition, the induced subgraph on that part would have average degree at least $cn-8rc^{-1}dn > cn/2$. This would in turn imply that each part had more than $cn/2$ vertices, contradicting the fact there are at least $2 c^{-1}$ parts.

In the base case $i=1$, the trivial partition $P_1=\{V\}$ easily has the desired properties above. The induction hypothesis is that we already have a partition $P_i$ with the desired properties. If each part $W_j$ of $P_i$ has the property that $G[W_j]$ does not have a cut with density less than $d$ then $P_i$ is the desired partition for the lemma and we are done. Otherwise, there is a part $W_j \in P_i$  such that $G[W_j]$ has minimum degree $\delta|W_j|$ with $\delta \geq c(1-20dc^{-2})^{i-1} \geq c(1-\frac{c}{4})^{2/c} \geq \frac{c}{2}$, average degree $D \geq cn-8(i-1)c^{-1}dn$ and a cut with density at most $d$. Applying Lemma \ref{keylong}, there is a partition $W_j=U_1 \cup U_2$ such that, for $i=1,2$, the graph $G[U_i]$ has minimum degree at least
$$(\delta -5d \delta^{-1})|U_i| = \delta(1-5d\delta^{-2})|U_i|\geq \delta(1-20dc^{-2})|U_i| \geq c(1-20dc^{-2})^{i}|U_i|$$
and average degree at least
$$D-4\delta^{-1}d|W_j| \geq D-8c^{-1}dn \geq cn-8(i-1)c^{-1}dn-8c^{-1}dn=cn-8ic^{-1}dn.$$
We obtain the partition $P_{i+1}$ from $P_i$ by replacing the part $W_j$ by the two parts $U_1$ and $U_2$. The induction hypothesis and the above analysis shows that $P_{i+1}$ has the desired properties, completing the proof.
\end{proof}

The next lemma has similarities with Lemma \ref{sparseends}. In both of these lemmas, we obtain a decomposition into many paths and edges so that no vertex is an endpoint of too many paths. Whereas Lemma \ref{sparseends} has a better upper bound on the number of paths each vertex is on, here we will be able to guarantee that the ends lie in one part of a bipartite graph.

\begin{lemma}\label{bipdecomp}
Let $H$ be a bipartite graph with $n$ vertices and parts $A$ and $B$ and let $s$ be a positive integer. There is a partition of the edge set of $H$ into at most $n/2$ cycles and paths and $2sn$ edges so that the two endpoints of every path lie in $A$ and no vertex is an endpoint of more than $n/s$ of the  paths.
\end{lemma}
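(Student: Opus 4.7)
The plan is to apply Theorem~\ref{Lovaszlem} to $H$, fix any path endpoints in $B$ to lie in $A$, and then iteratively reduce any $A$-vertex whose endpoint multiplicity exceeds $n/s$ via a walking argument along the offending paths.

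For the initial decomposition, Theorem~\ref{Lovaszlem} gives a decomposition of $H$ into $h \le n/2$ paths and cycles. For each path endpoint in $B$, I would remove the final edge of that path; since $H$ is bipartite, this moves the endpoint into $A$ (and eliminates the path entirely if it had length at most $2$). The removed edges join the extras. Since at most $2h \le n$ path endpoints lie in $B$ to begin with, this contributes at most $n$ extra edges, and every remaining path has both endpoints in $A$.

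For the sparsification, let $m_v$ denote the number of current paths with $v$ as an endpoint, and set $\Phi = \sum_{v \in A} \max(0, m_v - n/s)$; since $\sum_v m_v \le 2h \le n$, we have $\Phi \le n$ initially. While $\Phi > 0$, pick $v \in A$ with $m_v > n/s$ and a path $P$ ending at $v$ with vertex sequence $v = a_0, b_1, a_1, b_2, a_2, \ldots$, and let $i \ge 1$ be the smallest index such that either $a_i$ is the other endpoint of $P$ or $m_{a_i} < n/s$. Then move the first $2i$ edges of $P$ to the extras, and replace $P$ by its (possibly empty) remaining suffix.

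The key observation is that at most $s$ vertices of $A$ have $m_v \ge n/s$ (since $\sum_A m_v \le n$); as $v$ itself is one of them, among $a_1, \ldots, a_s$ at most $s - 1$ can satisfy $m \ge n/s$, so $i \le s$ and each operation costs at most $2s$ extras. Moreover each operation decreases $\Phi$ by at least $1$: if $a_i$ is strictly safe and interior to $P$, then $v$'s contribution drops by $1$ while $a_i$'s stays at $0$ because $m_{a_i}+1 \le n/s$; if $a_i$ is the other endpoint of $P$, then $v$'s contribution still drops by $1$ (and the other endpoint's may too). Hence there are at most $\Phi_{\mathrm{init}} \le n$ operations, contributing at most $2sn$ extra edges in the sparsification phase. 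Combined with the at most $n$ extras from the endpoint-fixing step, the total number of extras is on the order of $O(sn)$, matching the lemma's $2sn$ bound up to a small multiplicative constant; the main obstacle is to run the walking argument cleanly, which rests on the crucial observation that only $s$ vertices can have multiplicity at least $n/s$.
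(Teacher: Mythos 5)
Your route is genuinely different from the paper's: the paper never rebalances endpoints iteratively. After the same Lov\'asz decomposition and the same end-edge deletions to force endpoints into $A$, it splits every path with at most $4(s-1)$ edges entirely into single edges, and then, for each surviving (long) path, it re-roots each of its two ends at one of the last $s$ vertices of $A$ on that end, choosing the new endpoints via a greedy assignment so that each vertex of $A$ is chosen at most $2t/s\le n/s$ times; this is a one-shot argument costing at most $4(s-1)$ edges per path, for a total of $n+4(s-1)\tfrac n2\le 2sn$ extra edges. Your walking argument with a potential function is closer in spirit to Lemma~\ref{sparseends}, and the idea is workable, but as written it has a gap and, even on its own accounting, misses the stated constant.

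The gap is in the step ``each operation decreases $\Phi$ by at least $1$ \dots because $m_{a_i}+1\le n/s$.'' When $n/s$ is not an integer this implication fails: if $m_{a_i}=\lfloor n/s\rfloor$ then $m_{a_i}<n/s$ but $m_{a_i}+1>n/s$, so the operation simply transfers the violation from $v$ to $a_i$. Concretely, with $m_v=\lceil n/s\rceil$ and $m_{a_i}=\lfloor n/s\rfloor$ the loss at $v$ and the gain at $a_i$ are both $1-\{n/s\}$, so $\Phi$ does not decrease at all; consequently neither the bound of at most $n$ operations (hence the $2sn$ bound on moved edges) nor even the final property that every vertex ends on at most $n/s$ paths is established, and a priori the process could shuttle violations among near-threshold vertices for as many as $\Theta(e(H))$ steps. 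The natural repair is to call $a_i$ safe only when $m_{a_i}\le\lfloor n/s\rfloor-1$ and to use the potential $\sum_v\max\bigl(0,m_v-\lfloor n/s\rfloor\bigr)$, which then drops by exactly $1$ per operation; but the ``blocked'' vertices are now those with $m_v\ge\lfloor n/s\rfloor$, of which there can be up to $n/\lfloor n/s\rfloor$ (about $2s$ rather than $s$), so each operation may cost about $4s$ edges and the total becomes roughly $4sn+n$; one must also treat separately the trivial regime where $\lfloor n/s\rfloor$ is tiny, say $s\ge n/2$, where $2sn$ exceeds $e(H)$. Note also that even granting your decrease-by-one claim, your own count is $2sn+n$, already above $2sn$, as you concede. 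Since Lemma~\ref{bipdecomp} is applied in Theorem~\ref{EGlargemin} with ample slack, an $O(sn)$ bound would suffice downstream, but to prove the lemma as stated you should either switch to the one-shot re-rooting described above or carry out a sharper amortized analysis of your walking process.
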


\begin{proof}
Decompose $H$ into at most $n/2$ cycles and paths using Theorem \ref{Lovaszlem}.  For each path that does not have both of its endpoints in $A$, delete one or two of its end edges so that both of its end vertices lie in $A$. We therefore obtain a decomposition of $H$ into cycles, paths and edges with at most $n/2$ cycles and paths and at most $n$ edges such that all the endpoints of the paths lie in $A$. Each path with at most $4(s-1)$ edges we decompose into edges, so the remaining paths have more than $4(s-1)$ edges.

For each path $p$ in the decomposition, let $p_0,p_1$ denote the endpoints of $p$. Let $t \leq n/2$ denote the number of paths $p$ in the decomposition, so the set $D$ of endpoints of paths in the decomposition has $|D|=2t$. We make an auxiliary bipartite graph $L$ with parts $D$ and $A$. In $L$, we connect a vertex $a \in A$ to an endpoint $p_i \in D$ of a path $p$ if $a$ is one of the last $s$ points of $A$ on the end $p_i$ of $p$. Since each path in the decomposition has at least $4s-3$ edges, we have that every vertex in $D$ has at least $s$ neighbors in $A$. We can greedily find a mapping $f:D \rightarrow A$ so that $f$ maps every end in $D$ to one of its at least $s$ neighbors in $L$ and the preimage $f^{-1}(a)$ of every vertex $a \in A$ has size at most $2t/s \leq n/s$.

For each path $p$ in the decomposition, shorten it at both ends so that its new endpoints are $f(p_0)$ and $f(p_1)$. For each path, at most $4(s-1)$ of its edges are deleted. We thus obtain a decomposition into at most $n/2$ cycles and paths and at most $n+4(s-1)\frac{n}{2} \leq 2sn$ edges so that the endpoints of each of the paths are in $A$ and no vertex is an endpoint of more than $n/s$ of the paths.
\end{proof}

We next present the proof of Theorem \ref{EGlargemin}.

\vspace{2mm}

\noindent {\bf Proof of Theorem \ref{EGlargemin}:} We may assume that $n \geq 2^{81}c^{-25}$ as otherwise the result follows from Theorem \ref{firstmain}. Let $d=c^3/80$. By Lemma \ref{lempartcutdense}, there is a vertex partition of $G$ into parts $V_1,\ldots,V_j$ each of order more than $cn/2$ (so $j \leq 2c^{-1} $) such that for each $i$ the induced subgraph $G[V_i]$ has minimum degree at least $\frac{c}{2}|V_i|$ and is $d$-cut dense.

Let $s=2^{40}c^{-11}$.  Let $B_i$ be the bipartite graph induced by parts $V_i$ and $\bigcup_{h>i} V_h$. By Lemma \ref{bipdecomp}, there is a partition of the edge set of $B_i$ into at most $n/2$ cycles and paths and $2sn$ edges so that the two endpoints of every path lie in $V_i$ and no vertex is the endpoint of more than $r=n/s$ paths. Let $p_1,\ldots,p_t$ denote the $t \leq n/2$ paths in the partition and let $u_h,v_h$ be the endpoints of $p_h$.

As $G[V_i]$ is $d$-cut dense, each vertex in $G[V_i]$ has degree at least $d|V_i| \geq dcn/2$. It follows that for every pair $u,v \in V_i$, there are disjoint subsets $S \subset N(u), T \subset N(v)$ of $V_i$ each of cardinality at least $dcn/4$.

By Lemma \ref{shortpath}, as $G[V_i]$ is $d$-cut dense and trivially $(1,dc/4)$-thin, with $\ell=2 \lceil 2/d \rceil \leq 8/d$, there are at least $\frac{(dc/4)d|V_i|^2}{4\ell} \geq 2^{-8}d^3c^2n|V_i|$ edge-disjoint paths of length at most $\ell$  between $S$ and $T$. 
Note that $G[V_i]$ has maximum degree less than $|V_i|$ and
$2^{-8}d^3c^2n|V_i| \geq 3t\ell+12\max\left(\sqrt{t\ell},r\right)|V_i|$, where we used that 
$r=n/s, s=2^{40}c^{-11}, t \leq n/2, \ell \leq 8/d=640c^{-3}, |V_i|\geq cn/2$ and $n \geq 2^{81}c^{-25}$.
Thus, by Lemma \ref{nice0}
there are edge-disjoint paths $s_1,\ldots,s_t$ in $G[V_i]$ where $s_h$ has endpoints $u_h,v_h$ and each vertex is an internal vertex on at most $\lceil \frac{1}{2}\sqrt{t \ell} \rceil$ of these $t$ paths. As $p_h$ and $s_h$ are paths with the same endpoints and $s_h$ has length at most $\ell + 2$, the union of $p_h$ and $s_h$  can be decomposed into at most $\ell+ 2 < 4 \ell$ cycles. This gives a partition of the edge set of $B_i$ into $4\ell t \leq 2\ell n$ cycles and $2sn$ edges.

Let $H_i$ be the subgraph of $G[V_i]$ formed by deleting the edges from $s_1,\ldots,s_t$. We claim that $H_i$ is $d/4$-cut dense. It suffices to show that $d_{H_i}(S,V \setminus S) \geq d/4$ holds for any $S$ with $|S| \leq |V_i|/2$. We split the proof into two cases, depending on whether or not $|S| \leq d|V_i|/4$. For any vertex $v \in G[V_i]$, the number of edges which contain $v$ and are contained in one of the paths $s_h$ is at most  $r+2\lceil \frac{1}{2}\sqrt{t \ell} \rceil < (n/s)+\sqrt{(n/2)(8/d)}+2 \leq cdn/8 \leq d|V_i|/4$ (recall that $s=2^{40}c^{-11}$ and $n \geq 2^{81}c^{-25}$). 
Hence, the  minimum degree in $H_i$ is at least $d(|V_i|-1)-d|V_i|/4 \geq d|V_i|/2$.  Thus, for any subset $S \subset V_i$ with $|S| \leq d|V_i|/4$, each vertex in $S$ has at least $d|V_i|/2-|S| \geq d|V_i|/4$ neighbors in $V_i \setminus S$. It follows that $d_{H_i}(S,V_i \setminus S) \geq d/4$ in this case. The total number of edges on any of the paths $s_1,\ldots,s_t$ is at most $t(\ell + 2) \leq 3t\ell$. Hence, for any $S \subset V_i$ with $d|V_i|/4 \leq |S| \leq |V_i|/2$, using that $|V_i|  \geq cn/2$, $d=c^3/80$  and $n \geq 2^{81}c^{-25}$, we have
$$3t \ell \leq 3 (n/2)(8/d) =12n/d \leq 2^{-6}c^2d^2n^2 = (d/2)(dcn/8)(cn/4)\leq \frac{d}{2}|S||V_i \setminus S|$$
and hence $d_{H_i}(S,V_i \setminus S) \geq d_G(S,V_i \setminus S)-\frac{d}{2} \geq \frac{d}{2}$.

We thus have a decomposition of $G$ into at most $2 j \ell n$ cycles, $2jsn$ edges and the vertex-disjoint graphs $H_1,\ldots,H_j$, each of which is $d/4$-cut dense.  By Theorem \ref{thmcutdense}, each $H_i$ can be decomposed into $O(|H_i|/d)$ cycles and edges. Thus $G$ can be edge-partitioned into at most $2 j\ell n+2jsn+O(\sum_{i=1}^j |H_i|/d)=(2j\ell+2js+O(1/d))n=O(c^{-12}n)$ cycles and edges, completing the proof.
\qed

\section{Concluding remarks}

A conjecture of Haj\'os (see~\cite{L68}) states that every Eulerian graph on $n$ vertices can be decomposed into at most $\frac{n}{2}$ cycles. It is not hard to see that this conjecture implies the Erd\H{o}s-Gallai conjecture. To see this, remove cycles until we are left with a tree. Then the union of the cycles forms an Eulerian graph which, if Haj\'os' conjecture is correct, can be decomposed into at most $\frac{n}{2}$ cycles. Since there are at most $n-1$ edges in the remaining tree, the Erd\H{o}s-Gallai conjecture would follow. Moreover, the example mentioned by Erd\H{o}s~\cite{E83} would imply that the resulting bound of roughly $\frac{3n}{2}$ cycles and edges is asymptotically tight.

If we rephrase this conjecture as asking whether every graph on $n$ vertices can be decomposed into at most $\frac{n}{2}$ cycles and at most $n - 1$ edges then Lemma~\ref{lem:main} may be considered as some small progress. Indeed, this lemma easily implies that any graph on $n$ vertices may be decomposed into at most $\frac{n}{2}$ cycles and $O(n^{2 - \frac{1}{10}})$ edges. This result is the key component in our proof of Theorem~\ref{firstmain} and it would be interesting to know whether it can be sufficiently strengthened to give a further improvement over the bound $O(n \log \log n)$. Progress of a different sort was obtained by Fan \cite{F03}, who proved a covering version of the Haj\'os conjecture, confirming a conjecture of Chung \cite{C80}.

\vspace{2mm}
{\bf Acknowledgements.} We would like to thank David Ellis and Daniel Kane for helpful remarks. We would also like to thank the anonymous referees for a number of useful comments.

\end{document}